\documentclass[11pt]{article}
\usepackage{amsmath}
\usepackage{amssymb,amsbsy,amsthm}
\usepackage{graphicx}
\usepackage[dvipsnames]{xcolor}
\usepackage{enumerate}
\usepackage[margin=1in]{geometry}
\usepackage{hyperref}

\allowdisplaybreaks[1]

\numberwithin{equation}{section}



\newcommand{\p}{\mathbb{P}} 
\newcommand{\E}{\mathbb{E}} 
\newcommand{\eps}{\varepsilon} 
\newcommand{\TV}{\mathrm{TV}} 


\newcommand{\Poi}{\mathrm{Poi}} 





\newcommand{\wt}{\widetilde} 






\newcommand{\MAP}{\mathrm{MAP}} 

\newcommand{\Rev}{\mathrm{Rev}} 



\def\tE{\texttt{E}}


\newtheorem{theorem}{Theorem}[section]
\newtheorem{lemma}[theorem]{Lemma}


\begin{document}

\title{How fragile are information cascades?}
\author{
	Yuval Peres
	\thanks{Microsoft Research; \texttt{peres@microsoft.com}.}
	\and
	Mikl\'os Z.\ R\'acz
	\thanks{Princeton University; \texttt{mracz@princeton.edu}.}
	\and
	Allan Sly
	\thanks{Princeton University; \texttt{asly@math.princeton.edu}.}
	\and
	Izabella Stuhl
	\thanks{Penn State University; \texttt{ius68@psu.edu}.}
}
\date{\today}

\maketitle


\begin{abstract}
It is well known that sequential decision making may lead to information cascades.
That is, when agents make decisions based on their private information, as well as observing the actions of those before them,
then it might be rational to ignore their private signal and imitate the action of previous individuals.
If the individuals are choosing between a right and a wrong state,
and the initial actions are wrong, then the whole cascade will be wrong.
This issue is due to the fact that cascades can be based on very little information.

We show that if agents occasionally disregard the actions of others and base their action only on their private information,
then wrong cascades can be avoided. Moreover, we study the optimal asymptotic rate at which the error probability at time $t$ can go to zero.
The optimal policy is for the player at time $t$ to follow their private information with probability $p_{t} = c/t$,
leading to a learning rate of $c'/t$, where the constants $c$ and $c'$ are explicit.
\end{abstract}


\section{Introduction} \label{sec:intro} 

Many everyday situations involve sequential decision making, where one makes a decision based on some private information and the previous actions of others.
Consider, for example, the following classroom experiment (see~\cite[Chapter~16]{easley2010networks}). 
An experimenter puts an urn containing three balls at the front of the room. 
This urn is either \emph{majority blue}, containing two blue balls and one yellow ball, 
or \emph{majority yellow}, containing two yellow balls and one blue ball; 
both urns are equally likely to be chosen. 
The students then come to the front of the room one by one. 
Each student draws a ball at random from the urn and puts it back without showing it to the rest of the class. 
The student then has to guess the majority color of the urn, announcing her guess publicly. 
Each student thus makes their decision based on their draw and the announcements of those gone before them.

Let us consider how such an experiment proceeds. 
The first student only has her own draw to go by, so she will announce the drawn color as her best guess of the majority color. 
The second student knows this, 
so together with her own draw she has two independent draws as information. 
If the colors of the two agree, then the second student announces this color. 
If the colors of the two draws differ, then she has to use a tie-breaking rule---let us assume that she breaks ties by following her own draw. 
With this choice we see that the second student also announces the color of the ball she drew. 
Hence the third student has three independent draws as information and her best guess for the majority color of the urn will be the majority color among the three draws.

Notice that if the first two announced colors were blue, then the third student announces blue \emph{regardless of the color of her draw}. 
The fourth student knows this and hence the announcement of the third student has no information value. 
The fourth student is thus in the same situation as the third one and will also just announce blue following the first two students. 
Following the same logic, all subsequent students will announce blue, regardless of the color of their draw. 
This phenomenon is known as \emph{herding} or as an \emph{information cascade}.
Its study was originated by Banerjee~\cite{banerjee1992simple} and by Bikhchandani, Hirshleifer, and Welch~\cite{bikhchandani1992theory}, independently and concurrently;
we refer to Easley and Kleinberg~\cite[Chapter~16]{easley2010networks} for an exposition.

The main issue with information cascades is that they can be \emph{wrong}.
For instance, it might be that the urn is majority yellow, but the first two students draw a blue ball and announce blue, and hence \emph{everyone} announces blue as their guess. 
The main cause of this is that information cascades can be based on very little information: the actions of a few initial actors can determine all subsequent actions.
This also explains why information cascades are \emph{fragile}: if additional information is revealed (e.g., someone reveals not only their action but also their private signal) or if some people deviate from rational behavior, then wrong information cascades can be broken. The focus of this paper is to analyze the fragility of information cascades quantitatively.

To this end, we 
study a variant of the simple model of sequential decision making studied previously, where not all agents are Bayesian: 
some are ``revealers'', who disregard the actions of others and act solely based on their private signal. 
This is motivated by both empirical results from laboratory experiments on human behavior in such a setting~\cite{anderson1997information}, as well as theoretical considerations~\cite{bernardo2001evolution}; see Section~\ref{sec:related} for further discussion of related work. 
We assume that the player at time $t$ is a revealer with probability $p_{t}$, independently of everything else, and is a Bayesian otherwise. 
While agents do not know whether those before them were Bayesians or revealers, this process still introduces additional information that can be useful for making inferences. 
Are wrong information cascades broken in such a model? That is, do people eventually learn the ``correct'' action?

We show that the answer is yes: there exist revealing probabilities $\left\{ p_{t} \right\}_{t=1}^{\infty}$ such that learning occurs.
Moreover, we study the optimal asymptotic rate at which the error probability at time $t$ can go to zero.
We show that the optimal policy is for the player at time $t$ to follow their private information with probability $p_{t} = c/t$, leading to a learning rate of $c'/t$, where the constants $c$ and $c'$ are explicit.

\subsection{Model and main result} \label{sec:model_main_result} 

We describe the simplest case of the model first, in order to focus on the conceptual points; we discuss generalizations at the end of the paper.
The state of the world is
$\theta \in \left\{ 1, 2 \right\}$,
chosen uniformly at random, that is,
$\p \left( \theta = 1 \right) = \p \left( \theta = 2 \right) = 1/2$.
At times $t = 1, 2, 3, \dots$ players try to guess the state of the world,
based on their private information, as well as observing the actions (guesses) of those before them.

The private signals are drawn in the following way.
There is an urn that contains two types of balls: type $1$ balls are blue and type $2$ balls are yellow.
Given $\theta$, there are $a$ balls of type $\theta$ in the urn and $b$ balls of the other type,
where we assume that $a > b > 0$.
Each player draws a single ball (with replacement) from the urn, its color is their private signal.
In other words, the private signals
$X_{1}, X_{2}, \dots$ are i.i.d.\ with the following distribution:
\begin{align*}
 \p \left( X_{1} = 1 \, \middle| \, \theta = 1 \right) &= \frac{a}{a+b},
 \qquad
 \qquad
 \p \left( X_{1} = 2 \, \middle| \, \theta = 1 \right) = \frac{b}{a+b},
 \\
 \p \left( X_{1} = 1 \, \middle| \, \theta = 2 \right) &= \frac{b}{a+b},
 \qquad
 \qquad
 \p \left( X_{1} = 2 \, \middle| \, \theta = 2 \right) = \frac{a}{a+b}.
\end{align*}
The goal of the players is to guess the majority color (type) of the balls in the urn.
We denote the actions (guesses) of the players by $Z_{1}, Z_{2}, \dots$.
We assume that each player is one of two kinds:
\begin{itemize}
 \item a \emph{Bayesian}, whose guess is the maximum a posteriori (MAP) estimate\footnote{We assume that if the posteriors are equal, then a Bayesian follows their private signal.}; or
 \item a \emph{revealer}, whose guess is their private signal.
\end{itemize}
We assume that player $t$ is a revealer with probability $p_{t}$, independently of everything else.
Formally, let $I_{1}, I_{2}, \dots$ be independent Bernoulli random variables 
(and also independent of everything else) 
such that $\E \left[ I_{t} \right] = p_{t}$.
If $I_{t} = 0$, then player $t$ is a Bayesian and hence $Z_{t} = \MAP \left( Z_{1}, \dots, Z_{t-1}, X_{t} \right)$,
while if $I_{t} = 1$, then player $t$ is a revealer and hence $Z_{t} = X_{t}$.
Note that players do not know whether the players before them are Bayesians or revealers.
We do assume, however, that the players know the probabilities $\left\{ p_{t} \right\}_{t = 1}^{\infty}$.

The players aim to learn the majority color/type of the urn, that is, to learn $\theta$, and also to minimize the probability of an incorrect guess.
Denote by
\[
\tE_{t} := \p \left( Z_{t} \neq \theta \right)
\]
the probability that the guess of player $t$ is incorrect.
We aim to understand the optimal asymptotic rate at which the error probability $\tE_{t}$ can go to zero;
the following theorem is our main result.

\begin{theorem}\label{thm:main_two}
Consider the model described above and let
\begin{equation}\label{eq:kappastar}
\kappa_{\star} \equiv \kappa_{\star} \left( a, b \right) := \frac{1}{1+\frac{a/b-1}{\log a/b}\left( \log \left( \frac{a/b-1}{\log a/b} \right) - 1 \right)}.
\end{equation}
We have that
\begin{equation}\label{eq:opt_rate_two}
 \inf_{\left\{ p_{t} \right\}_{t=1}^{\infty}} \limsup_{t \to \infty} t \tE_{t}
 = \kappa_{\star} \left( a, b \right).
\end{equation}
\end{theorem}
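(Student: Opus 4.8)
The plan is to prove the upper and lower bounds in \eqref{eq:opt_rate_two} separately, after first understanding the dynamics of the posterior log-likelihood ratio. Let $L_t := \log \frac{\p(\theta = 1 \mid Z_1, \dots, Z_t)}{\p(\theta = 2 \mid Z_1, \dots, Z_t)}$ be the public log-odds after $t$ actions. The first step is to analyze how a Bayesian at time $t+1$ acts: she computes $L_t$, adds the log-likelihood contribution of her private signal (which is $\pm \log(a/b)$), and guesses according to the sign; thus a Bayesian in a ``cascade'' regime (where $|L_t| > \log(a/b)$) simply copies the majority guess and contributes no information, whereas a revealer always reveals $X_{t+1}$, contributing $\pm\log(a/b)$ to a shadow likelihood that later Bayesians can partially reconstruct. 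The key structural observation is that, conditioned on being in a wrong cascade (say $\theta = 2$ but $L_t$ large and positive), the process only escapes through the actions of revealers; I would set up the conditional law of the observed action sequence given $\theta$ and given the cascade, compute the per-step expected drift and variance of $L_t$ contributed by a revealer appearing with probability $p_{t+1}$, and show the drift toward the truth is of order $p_{t+1}$ times an explicit constant depending on $a/b$.

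For the upper bound (achievability), I would take $p_t = c/t$ for the optimal constant $c = c(a,b)$, and track the probability $q_t := \p(\text{wrong cascade at time } t)$. A wrong cascade persists from time $t$ to $t+1$ unless a revealer at time $t+1$ produces enough signal to break it; because breaking a deep cascade can require several consecutive ``lucky'' revelations, the relevant quantity is really the expected number of revealer signals needed to overturn the accumulated log-odds, which by a renewal/large-deviations computation gives the $\frac{a/b-1}{\log a/b}$ and $\log\!\big(\frac{a/b-1}{\log a/b}\big)$ terms appearing in $\kappa_\star$. I expect $q_t$ to satisfy an approximate recursion $q_{t+1} \approx q_t\big(1 - \alpha p_{t+1} + o(p_{t+1})\big)$ away from the boundary, plus a lower-order inflow term, which with $p_t = c/t$ yields $q_t \asymp t^{-\alpha c}$; choosing $c$ so that $\alpha c = 1$ makes $q_t \asymp 1/t$, and then $\tE_t = \frac{b}{a+b} + \Theta(q_t)$—wait, more carefully, $\tE_t$ is dominated by the probability the current player is a revealer who draws the minority color \emph{and} by the residual cascade probability, so $t\tE_t \to \kappa_\star$ after optimizing the trade-off between making $p_t$ small (to reduce revealer errors $p_t \cdot \frac{b}{a+b}$) and large (to escape cascades faster). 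Optimizing this one-parameter trade-off in $c$ produces the stated closed form for $\kappa_\star$.

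For the lower bound (optimality), I would argue that \emph{any} policy $\{p_t\}$ must pay both costs. Fix any sequence $\{p_t\}$; on the event of a wrong cascade the instantaneous error is essentially $1$, while on a revealer step the error is at least $\frac{b}{a+b} p_t$. A second-moment or potential-function argument shows the cascade-escape rate is at most $\alpha p_t(1+o(1))$ regardless of policy (a Bayesian contributes nothing inside a deep cascade, so the truth can only be learned through revelations), hence $\liminf_t t \cdot q_t \geq$ something controlled by how fast $\sum p_s$ grows; combining with the revealer-error term $\tE_t \gtrsim \frac{b}{a+b}p_t + q_t$ and using that $q_t$ is large when $\sum_{s \le t} p_s$ is small gives $\limsup_t t\tE_t \geq \kappa_\star$ by the same optimization. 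Making the escape-rate bound uniform over all policies—in particular handling policies where $p_t$ is not eventually monotone or not of order $1/t$, and controlling the ``inflow'' into cascades from non-cascade states—is the step I expect to be the main obstacle; the cleanest route is probably to reduce to comparing $L_t$ with an explicit supermartingale whose increments are dominated in a strong enough sense that the $1/t$ rate is forced, then invoke a Tauberian-type argument to pass from the sum $\sum p_s$ back to $t\tE_t$.
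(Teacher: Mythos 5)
Your overall architecture---splitting $\tE_t$ into the revealer cost $\tfrac{b}{a+b}p_t$ plus the Bayesian (cascade) error, and balancing $\tfrac{b}{a+b}c$ against a cascade-survival probability of order $t^{-\alpha c}$ when $p_t = c/t$---is exactly the paper's heuristic, and your lower-bound reduction (``any policy pays both costs'') matches the paper's short argument that $\tE_t \ge \tfrac{b}{a+b}p_t$ forces any policy beating $\kappa_\star$ to have $\limsup_t t\,p_t \le (1-\eps)\tfrac{a+b}{b}\kappa_\star$. But both quantitative pillars are missing. For the upper bound, the recursion $q_{t+1}\approx q_t\bigl(1-\alpha p_{t+1}+o(p_{t+1})\bigr)$ is asserted rather than derived, and the constant $\alpha$ is never identified; the entire content of the theorem is that $\alpha=\max_{\lambda\in[0,1]}\tfrac{\lambda a+(1-\lambda)b-a^{\lambda}b^{1-\lambda}}{a+b}=\tfrac{b}{(a+b)\kappa_\star}$. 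Moreover ``wrong cascade at time $t$'' is not a two-state quantity: cascades have a depth, the process moves in and out of the cascade region, and the inflow from non-cascade times cannot be dropped. The paper replaces your recursion by uniform-in-trajectory control of the public likelihood ratio $R_t$: it computes $\E_1\bigl[(R_t/R_{t-1})^{-\lambda}\mid\cF_{t-1}\bigr]$ in the three regimes $R_{t-1}<b/a$, $R_{t-1}\in[b/a,a/b]$, $R_{t-1}>a/b$, factors $R_t=R_t^{(1)}R_t^{(2)}$ over cascade-toward-$1$ times and the rest, and applies a two-parameter exponential-moment/Markov bound at exponents $(\lambda_\star,1-\lambda_\star)$. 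Some argument of this kind is needed where you wrote ``renewal/large-deviations computation.'' (Also, taking $\alpha c=1$ exactly does not make $t q_t\to 0$; one takes $c=(1+\eps)/\alpha$ and lets $\eps\to0$, as the paper does.)

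For the lower bound, the step you yourself flag as the main obstacle is where the proof lives, and the statement you propose is not the right one: ``the cascade-escape rate is at most $\alpha p_t(1+o(1))$ regardless of policy'' is false as a per-step claim, since from a shallow wrong cascade a single revealer voting the truth breaks it, which happens with probability $\tfrac{a}{a+b}p_t$, and $\tfrac{a}{a+b}>\tfrac{b}{(a+b)\kappa_\star}=\alpha$. What is actually needed is a trajectory-level lower bound: whenever $M_t=\sum_{s\le t}p_s\le\tfrac{1-\eps}{\alpha}\log t$, the probability that the log-likelihood ratio, pushed below $\log(b/a)$ by a constant-probability initial block of wrong signals, stays in a wrong window up to time $t$ is at least $t^{-(1-\eps)(1+o(1))}$, hence $\omega(1/t)$. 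The paper proves this by tilting the walk to zero drift, using optional stopping together with a martingale maximal inequality (Freedman) to get probability of order $1/\log t$ under the tilted law, and then lower-bounding the Radon--Nikodym derivative on that event by roughly $\exp\bigl(-(1-\lambda_\star)\log^{3/4}t-\alpha M_t\bigr)$; the very sparse regime $M_t<\delta\log t$ requires a separate elementary Chernoff-type argument. Your ``explicit supermartingale plus Tauberian argument'' does not yet supply any of this, so as it stands neither the achievability of $\kappa_\star$ nor its optimality is established.
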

That is, the optimal rate of learning is $1/t$, and we obtain the specific constant as well in~\eqref{eq:kappastar}.
As we shall see, one can get arbitrarily close to the optimum by taking
\begin{equation}\label{eq:pt_opt_main}
 p_{t} = \left( 1 + \eps \right) \frac{a+b}{b} \cdot \frac{\kappa_{\star} \left( a, b \right)}{t} \wedge 1
\end{equation}
for $t \geq 1$, where $\eps > 0$ is arbitrary (and where we use the standard notation $x \wedge y := \min \left\{ x, y \right\}$).

\subsection{Heuristic explanation of the optimal rate of learning} \label{sec:heuristics} 

We now provide intuition for why $1/t$ is the optimal order for the rate of learning, as well as the reasons behind the constant in~\eqref{eq:kappastar}. 
First, note that the probability that player $t$ is a revealer and draws a ball of the minority color is $\tfrac{b}{a+b} p_{t}$. 
When this occurs, player $t$ guesses incorrectly, implying that $\tE_{t} \geq \tfrac{b}{a+b} p_{t}$. 
So in order for the error probability $\tE_{t}$ to go to zero, $p_{t}$ must go to zero as $t \to \infty$. 

On the other hand, $p_{t}$ cannot go to zero too quickly. 
If $\sum_{t=1}^{\infty} p_{t} < \infty$, then by the Borel-Cantelli lemma there will be only finitely many revealers almost surely. 
This leads to a situation similar to when there are no revealers: 
if a correct cascade has not started before the last revealer, then there is a constant probability of ending up in a wrong cascade.

In fact, $p_t$ should decay as $1/t$ to achieve the optimal rate of learning. 
To see a lower bound of this order, let $p_{t} = \delta / t$ for $\delta$ small. 
By a Chernoff bound, with high probability there will be at most $2 \delta \log t$ revealers among the first $t$ players. 
If the first two players and all revealers until time $t$ draw balls of the minority color, then so will every player until time $t$. 
This event has probability at least $c \left( b / (a+b) \right)^{2 \delta \log t}$ for some constant $c$, 
which is greater than $t^{-\eps}$ if $\delta > 0$ is small enough.

To see an upper bound, we now argue that if $p_{t} = C / t$ with $C$ large enough, 
then the probability that a wrong cascade (among Bayesians) lasts until time $t$ is $o(1/t)$. 
Indeed, in such a cascade some of the revealers will be visible---precisely those that deviate from the cascade consensus. 
The player at time $k$ has probability $\tfrac{a}{a+b} p_{k}$ to be a revealer who draws the majority color, 
and probability $\tfrac{b}{a+b} p_{k}$ to be a revealer who draws the minority color. 
Hence in a wrong cascade there will be, in expectation, $\tfrac{a}{a+b} C \log t$ deviations from the cascade consensus by time $t$, 
while in a right cascade the expected number of deviations is only $\tfrac{b}{a+b} C \log t$. 
The total number of deviations by time $t$ will roughly be Poisson distributed. 
The probability that a $\Poi \left( \lambda \right)$ random variable is larger by a constant factor than its mean is exponentially small in $\lambda$ 
and here $\lambda$ is on the order of $\log t$. 
Hence by taking $C$ large this exponential in $C \log t$ will be $o(1/t)$.

In fact, the heuristics of the previous paragraph give the right constant as well. 
To distinguish between right and wrong cascades we need to distinguish between $\Poi \left( \tfrac{a}{a+b} C \log t \right)$ and $\Poi \left( \tfrac{b}{a+b} C \log t \right)$ random variables. The total variation distance between them satisfies 
\begin{equation}\label{eq:tv_poisson}
 1 - \TV \left( \Poi \left( \tfrac{a}{a+b} C \log t \right), \Poi \left( \tfrac{b}{a+b} C \log t \right) \right) 
 = 
 t^{- \left( 1 + o \left( 1 \right) \right) f \left( a, b, C \right)}
\end{equation}
for some (explicit) function $f \left(a, b, C \right)$. 
The right hand side of~\eqref{eq:tv_poisson} is roughly the error probability if player $t$ is a Bayesian. 
This term 
should be balanced with the term $\tfrac{b}{a+b} p_{t} = \tfrac{bC}{a+b} \cdot \tfrac{1}{t}$ coming from player $t$ being a revealer and drawing a ball of the minority color. 
This balancing requires choosing $C = C(a,b)$ such that $f \left( a, b, C \right) = 1$, 
which occurs when $C = \tfrac{a+b}{b} \kappa_{\star} \left( a, b \right)$, just as in~\eqref{eq:pt_opt_main}.


\subsection{Related work} \label{sec:related} 

The special case of the model described in the previous subsection where every agent is Bayesian (i.e., with $p_{t} = 0$ for every $t \geq 1$) is identical to the model described in the exposition of Easley and Kleinberg~\cite[Chapter~16]{easley2010networks}.
The original model of Bikhchandani, Hirshleifer, and Welch~\cite{bikhchandani1992theory} differs only in its tie-breaking rule (breaking ties by flipping a fair coin), while that of Banerjee~\cite{banerjee1992simple} differs in the signal distribution (false signals are drawn from a continuous distribution).
Despite these minor differences, these models all share the same phenomenological behavior as described in the introductory paragraphs.

In particular, Bikhchandani~et~al.~\cite{bikhchandani1992theory} emphasize the \emph{fragility} of information cascades with respect to different types of shocks, as prior work on conforming behavior could not explain this phenomenon.
They show examples from numerous fields (e.g., politics, zoology, medicine, and finance) where cascades occur and are fragile.
The current paper can be viewed as a more detailed quantitative exploration of the fragility of cascades. What amount of additional information is needed to break wrong  cascades? What is the optimal rate of learning that can be achieved?

One possible source of additional information comes from people not acting in a rational, Bayesian manner.
It is well documented that human behavior is often irrational (see, e.g.,~\cite{kahneman1973psychology}).
In the information cascades setting, laboratory experiments by Anderson and Holt~\cite{anderson1997information} show that
while most participants act rationally, many do not.
When deviations from rational behavior occur,
participants often act mainly or solely based on their private information, disregarding the information in the actions of those before them.\footnote{See also related experiments and results by \c{C}elen and Kariv~\cite{ccelen2004distinguishing} for a setting with continuous signals.}
Such individuals effectively reveal their private signal, which is valuable information for those coming after them.
The model described in Section~\ref{sec:model_main_result}, which contains \emph{Bayesians} and \emph{revealers}, captures this empirically observed behavioral phenomenon.

A closely related model was introduced and studied by Bernardo and Welch~\cite{bernardo2001evolution}.
This model also contains two types of individuals:
(1) rational ones and
(2) overconfident ones, termed ``entrepreneurs'', who put more weight on their private signal than a rational individual would.
As the authors mention in their paper, their motivation was not to show that information cascades can be broken by overconfident behavior,
but rather to offer a simple explanation for the existence of overconfident individuals based on group selection principles.
Nevertheless, since here our focus is on breaking wrong information cascades, we compare our work with theirs in this regard.

In~\cite{bernardo2001evolution} the overconfidence of entrepreneurs is termed ``modest'' if they still put positive weight on the information from individuals before them,
and it is termed ``extreme'' if they act solely based on their private signal.
If the overconfidence of entrepreneurs is modest, then still a wrong cascade occurs with positive probability, bounded away from zero, which is undesirable.
Only if the overconfidence of entrepreneurs is extreme---as in the model in Section~\ref{sec:model_main_result}---can learning occur eventually with probability one.
Bernardo and Welch study their model via simulations which, in the extreme overconfidence setting, suggest that a vanishing fraction of entrepreneurs is optimal.
Our work is a rigorous and much more detailed study of this model\footnote{We note that there are small differences in the model studied here and the model of Bernardo and Welch~\cite{bernardo2001evolution}; for instance, in~\cite{bernardo2001evolution} it is assumed that the identities of entrepreneurs are known whereas we do not make this assumption.}; in particular, our results imply that the optimal number of entrepreneurs is \emph{logarithmic} in the size of the group.

The recent work of Cheng, Hann-Caruthers, and Tamuz~\cite{chengdeterministic} also considers sequential learning models with non-Bayesian agents 
and shows that wrong cascades can be avoided if there are some non-Bayesian agents. 
However, they assume (like in~\cite{bernardo2001evolution}) that each agent knows which of the previous agents were revealers, an assumption that we do not make. 
More importantly, the main contribution of the current paper is the explicit characterization of the optimal rate of learning.

Another possible source of additional information is using the first few agents as ``guinea pigs'', that is, forcing them to follow their private signals; see the work of Sgroi~\cite{sgroi2002optimizing}. 
Le, Subramanian, and Berry~\cite{le2017information} point out that this is related to the multi-armed bandit literature, 
with guinea pigs corresponding to agents used for exploring~\cite{lai1985asymptotically,agrawal1995sample}. 
They also mention that it follows from this literature that the optimal number of guinea pigs is logarithmic in the number of agents~\cite{agrawal1995sample}; 
this is consistent with our results, albeit in a slightly different setting.

The framework for sequential decision making described in this paper assumes finite discrete private signals.
If the informativeness of private signals is unbounded (e.g., Gaussian signals), then wrong cascades do not form and asymptotic learning occurs~\cite{smith2000pathological}.
In such settings the main question concerns the speed of asymptotic learning, 
see, for instance, the work of Hann-Caruthers, Martynov, and Tamuz~\cite{HCMT17}.

The framework of this paper also fits into the broader field of social learning.
In particular, there is a large literature on learning in social networks.
Acemoglu, Dahleh, Lobel, and Ozdaglar~\cite{acemoglu2011bayesian} consider a model of sequential decision making where agents act only once,
but each agent can only observe a \emph{subset} of previous actions, based on a stochastic social network.
One of their results is that asymptotic learning occurs even if private signals have bounded informativeness
if there are sufficiently many individuals whose neighborhoods are non-persuasive
and hence whose action will necessarily be influenced by their private signal.
Similar to revealers in the model described in Section~\ref{sec:model_main_result},
these individuals provide a sufficient amount of information for those coming after them
to lead to asymptotic learning.

Another typical setting that is studied involves agents who take repeated actions based on their private signal, as well as observing the actions of their neighbors in the network.
The main questions include whether or not all agents learn the correct action eventually,
what is the speed of learning if it occurs, and how do these depend on the network topology.
We highlight recent work of Harel, Mossel, Strack, and Tamuz~\cite{HMST17},
which is similar in spirit to the current paper in that it provides a detailed study of the asymptotic rates of social learning in a mean field setting.
A complete overview of the literature is beyond the scope of this article;
we refer the reader to the two papers above, as well as to the works of Gale and Kariv~\cite{gale2003bayesian}, Mossel, Sly, and Tamuz~\cite{MST12,MST14,mossel2015strategic}, and the references therein for more.

\section{Proof of Theorem~\ref{thm:main_two}} \label{sec:proofs_two} 

The action of player $t$ can be wrong in two ways:
(i) if they act as a Bayesian and the MAP estimator is incorrect, or
(ii) if they act on only their private signal and their draw from the urn is the minority type/color.
Hence, conditioning on the coin flip deciding whether player $t$ is a Bayesian or a revealer, we obtain that
\[
\tE_{t} = \p \left( Z_{t} \neq \theta \, \middle| \, I_{t} = 0 \right) \left( 1 - p_{t} \right) + \p \left( Z_{t} \neq \theta \, \middle| \, I_{t} = 1 \right) p_{t}.
\]
Now recall that given $I_{t} = 1$, we have $Z_{t} = X_{t}$, and so
\[
 \p \left( Z_{t} \neq \theta \, \middle| \, I_{t} = 1 \right) = \p \left( X_{t} \neq \theta \right) = \frac{b}{a+b}.
\]
Recall also that given $I_{t} = 0$, we have $Z_{t} = \MAP \left( Z_{1}, \dots, Z_{t-1}, X_{t} \right)$.
Thus 
\begin{equation}\label{eq:error_two_parts}
\tE_{t} = \p \left( \MAP \left( Z_{1}, \dots, Z_{t-1}, X_{t} \right) \neq \theta \right) \left( 1 - p_{t} \right) + \frac{b}{a+b} p_{t}.
\end{equation}
So we need to understand the probability that the MAP estimator is incorrect at time $t$. 
We summarize the behavior of the MAP estimator in Lemma~\ref{lem:map_estimator_two} and then prove Theorem~\ref{thm:main_two} using this, before turning to the proof of the lemma. 
In the statement of the lemma and throughout the paper we use standard asymptotic notation; 
for instance $f(t) = o(g(t))$ as $t \to \infty$ if $\lim_{t\to\infty} f(t) / g(t) = 0$ 
and $f(t) = \omega(g(t))$ as $t\to \infty$ if $\lim_{t \to \infty} f(t) / g(t) = \infty$.

\begin{lemma}\label{lem:map_estimator_two} 
Consider the setting of Theorem~\ref{thm:main_two} and fix $\eps > 0$. 
\begin{enumerate}[(a)]
\item\label{lem:map_ub} Suppose that 
\begin{equation}\label{eq:pt_opt}
 p_{t} = \left( 1 + \eps \right) \frac{a+b}{b} \cdot \frac{\kappa_{\star} \left( a, b \right)}{t} \wedge 1
\end{equation}
for every $t \geq 1$. Then 
\begin{equation}\label{eq:goal_o1t}
 \qquad \qquad 
 \p \left( \MAP \left( Z_{1}, \dots, Z_{t-1}, X_{t} \right) \neq \theta \right) = o \left( \frac{1}{t} \right) 
 \qquad 
 \text{ as } t \to \infty.
\end{equation}
\item\label{lem:map_lb} Suppose that  
\begin{equation}\label{eq:pt_lim}
 \limsup_{t \to \infty} t  p_{t} \leq \left( 1 - \eps \right) \frac{a+b}{b} \kappa_{\star} \left( a, b \right). 
\end{equation}
Then 
\begin{equation}\label{eq:goal_omega1t}
 \qquad \qquad 
 \p \left( \MAP \left( Z_{1}, \dots, Z_{t-1}, X_{t} \right) \neq \theta \right) = \omega \left( \frac{1}{t} \right)
 \qquad 
 \text{ as } t \to \infty.
\end{equation}
\end{enumerate}
\end{lemma}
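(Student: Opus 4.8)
The plan is to work conditionally on $\theta=1$ (the general case follows by the evident $1\leftrightarrow2$ symmetry of the model) and to track the \emph{public} log-likelihood ratio
\[
\Lambda_t:=\log\frac{\p(Z_1,\dots,Z_t\mid\theta=1)}{\p(Z_1,\dots,Z_t\mid\theta=2)},
\]
which depends only on the observed actions and the known sequence $\{p_k\}$, so it is computable by every agent. Write $\ell:=\log(a/b)$ for the log-likelihood ratio carried by one private signal. A short analysis of the MAP rule (using the stated tie-breaking convention) shows that the line splits into an up-cascade region $U=(\ell,\infty)$, a neutral zone $N=[-\ell,\ell]$, and a down-cascade region $D=(-\infty,-\ell)$: a Bayesian guesses $1$ throughout $U$, guesses $2$ throughout $D$, and copies $X_t$ throughout $N$. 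Since $X_t$ is independent of $\Lambda_{t-1}$, this yields
\[
\p\big(\MAP(Z_1,\dots,Z_{t-1},X_t)\neq\theta\big)=\tfrac{b}{a+b}\,\p(\Lambda_{t-1}\in N\mid\theta=1)+\p(\Lambda_{t-1}\in D\mid\theta=1),
\]
so it suffices to prove $\p(\Lambda_{t-1}\le\ell\mid\theta=1)=o(1/t)$ for part~(a) and $\p(\Lambda_{t-1}\in D\mid\theta=1)=\omega(1/t)$ for part~(b).

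Next I record the one-step behaviour of $\Lambda_t$ given $\theta=1$. In $N$ the action equals $X_t$, so $\Lambda$ takes $\pm\ell$ steps with probabilities $\tfrac a{a+b},\tfrac b{a+b}$; being upward-biased there, it exits $N$ after $O(1)$ steps and mostly into $U$. In $U$ the action is $1$ unless a revealer draws the minority colour: with probability near $1$ there is a small upward ``creep'' of size $\approx p_t\tfrac{a-b}{a+b}$, and with probability $p_t\tfrac b{a+b}$ a ``deviation'' produces a jump of $-\ell$; the net drift is positive, so the walk tends to climb to $+\infty$. In $D$ the picture is mirror-symmetric with $a,b$ swapped in the jump rate: a downward creep of size $\approx p_t\tfrac{a-b}{a+b}$ together with deviation-jumps of $+\ell$ at rate $p_t\tfrac a{a+b}$, and again a net drift pointing back toward $N$ (so a walk in $D$ eventually escapes almost surely). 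With $p_t\sim C/t$, over a window $[s,u]$ the creep accumulates to $(1+o(1))\tfrac{a-b}{a+b}C\log(u/s)$ and, up to harmless $O(1)$ corrections, the number of deviations has the distribution $\Poi\!\big(\tfrac{b}{a+b}C\log(u/s)\big)$ in $U$ and $\Poi\!\big(\tfrac{a}{a+b}C\log(u/s)\big)$ in $D$.

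The engine of the argument is a path large-deviation estimate. A walk that enters $D$ near time $s$ fails to escape $D$ by time $t$ only if its up-jump count stays below the barrier $u\mapsto O(1)+\tfrac{a-b}{(a+b)\ell}C\log(u/s)$ for every $u\in(s,t]$; the barrier slope is the fixed fraction $\tfrac{a/b-1}{(a/b)\log(a/b)}<1$ of the jump rate, so a Chernoff bound (using the constraint at $u=t$) gives that this probability is at most $(t/s)^{-(1+o(1))f(a,b,C)}$, with $f$ the function of Section~\ref{sec:heuristics}; $f$ is increasing (indeed linear) in $C$ and equals $1$ precisely when $C=\tfrac{a+b}{b}\kappa_{\star}(a,b)$, and, because the two Poisson rate functions were balanced to cross at the same point, the \emph{same} exponent controls the ``$\exists u$'' event that a walk in $U$ is pushed back below $\ell$ (here a maximal inequality is used). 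Part~(b) follows from a matching lower bound: with probability bounded below, the walk reaches $\Lambda_2=-2\ell$ (two minority draws), and a change-of-measure computation then shows it stays in $D$ through time $t-1$ with probability at least $t^{-(1+o(1))f^{-}}$, where $f^{-}<1$ is the exponent attached to the cap $(1-\eps)\tfrac{a+b}{b}\kappa_{\star}$ on $\limsup_t tp_t$; the degenerate regime $\sum_t p_t<\infty$, where only finitely many revealers ever occur and the walk stays in $D$ forever with constant probability, is handled separately. This gives $\p(\Lambda_{t-1}\in D\mid\theta=1)=\omega(1/t)$.

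The main obstacle is part~(a): I must rule out not merely one bad excursion but the possibility that $\Lambda$ oscillates among the three regions and happens to be low exactly at time $t-1$. Decomposing on the last time $\sigma$ at which the walk entered its current ``low phase'' (so either $\sigma=0$, or $\Lambda_{\sigma-1}>\ell\ge\Lambda_\sigma$), one obtains
\[
\p(\Lambda_{t-1}\le\ell\mid\theta=1)\ \le\ \sum_{\sigma\ge0}\p(\text{low phase starts at }\sigma)\cdot(t/\sigma)^{-(1+o(1))f}.
\]
The point that keeps this sum small is that leaving $U$ requires a revealer deviation \emph{at that very step}, so $\p(\text{low phase starts at }\sigma)\le p_\sigma\tfrac b{a+b}\,\p(\Lambda_{\sigma-1}\le 2\ell\mid\theta=1)$; feeding this back in and running an induction on $t$ that propagates simultaneously through the hierarchy of thresholds $k\ell$ yields $\p(\Lambda_{t-1}\le\ell\mid\theta=1)=O\!\big(t^{-(1+o(1))f}\big)$. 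Since the prescription~\eqref{eq:pt_opt} makes $C=(1+\eps)\tfrac{a+b}{b}\kappa_{\star}$, hence $f>1$, this is $o(1/t)$, as required. The delicate points are making the Poisson path estimates uniform in $s$ — reconciling the regime $t/s=O(1)$, where the bound is only $O(1)$, with $t/s\to\infty$ — and closing the induction, where the number of relevant thresholds grows like $\log t$.
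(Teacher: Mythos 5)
Your reduction to the public log-likelihood ratio, the three-region description of the MAP rule, and your identification of the governing exponent are correct --- in particular, the observation that the Poisson lower-tail exponent in the down-cascade region and the upper-tail exponent in the up-cascade region coincide is exactly the balance that produces $f_{\lambda_{\star}}$ and $\kappa_{\star}$. Your part~(b) is essentially the paper's own argument (a constant-probability initial dive, then a change of measure to a driftless walk whose Radon--Nikodym cost is $\exp\left(-(1+o(1))f_{\lambda_{\star}}M_{t}\right)\geq t^{-(1-\eps)(1+o(1))}$), but note two points you gloss over: the Radon--Nikodym bound needs a lower bound on how deep the path goes, not merely that it stays in $D$ (this is the paper's event $\mathcal{E}_{2}$, controlled via Freedman's inequality after optional stopping), and your separate treatment covers only $\sum_{t}p_{t}<\infty$, whereas one must also handle $M_{t}\to\infty$ slowly (the paper assumes~\eqref{eq:Mt_atleast_logt} and treats $M_{t}<\delta\log t$ separately in Appendix~\ref{sec:small_Mt}).

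The genuine gap is in part~(a). The bound you sum over $\sigma$ is asserted, not proved: the event that the walk stays at or below $\ell$ on all of $[\sigma,t-1]$ is not a single sojourn in $D$; it consists of an arbitrary number of alternations between $D$ and the neutral zone $N$, with time-inhomogeneous rates $p_{u}\sim C/u$ and an $O(1)$ additive slack at each re-entry, so the single-excursion Chernoff estimate $(t/\sigma)^{-(1+o(1))f}$ does not directly apply to it. Your proposed repair --- inducting simultaneously over the thresholds $k\ell$ --- is precisely where the argument is open: there are $\Theta(\log t)$ relevant thresholds, each level of the hierarchy loses at least a constant factor (the slack from starting $k$ levels above $\ell$), and without decay in $k$ these losses compound to a polynomial in $t$, which can swallow the entire margin $f-1=\Theta(\eps)$ you rely on; you flag this yourself but do not close it, nor do you make the $o(1)$ in the exponent uniform in $\sigma$ (including $\sigma\asymp t$), which the summation requires. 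The paper avoids this excursion bookkeeping entirely: it proves one exponential-moment bound,~\eqref{eq:exp_joint_lambda12_C0}, for the tilted fractional moments $\left(R_{t}^{(1)}\right)^{-\lambda_{\star}}e^{f_{\lambda_{\star}}\Gamma_{t}}\left(R_{t}^{(2)}\right)^{-(1-\lambda_{\star})}e^{f_{\lambda_{\star}}(M_{t}-\Gamma_{t})}$ (valid for every realization of the region-visit pattern, by induction on $t$ from~\eqref{eq:cond_exp_big_rewrite} and~\eqref{eq:cond_exp_small_rewrite}), combines it with the deterministic fact~\eqref{eq:Rt1leq1} and the crude estimate~\eqref{eq:Rt1_cantbetoosmall}, and concludes with a single Markov/union bound over a dyadic split of $R_{t}^{(1)}$ to get~\eqref{eq:ub_goal}. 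To salvage your route you would need to carry out the multi-threshold induction with losses summable in $k$ and uniform path estimates, or replace it with a tilting/moment bound of the paper's type --- at which point you have reproduced their proof.
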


\begin{proof}[Proof of Theorem~\ref{thm:main_two}]
Choose $\left\{ p_{t} \right\}_{t=1}^{\infty}$ as in~\eqref{eq:pt_opt}.
Lemma~\ref{lem:map_estimator_two} says that then~\eqref{eq:goal_o1t} holds,
and hence by~\eqref{eq:error_two_parts} we have that
$\tE_{t} =  \left( 1 + o \left( 1 \right) \right) \left( 1 + \eps \right) \kappa_{\star} \left( a, b \right)/t$
as $t \to \infty$.
Since $\eps > 0$ is arbitrary, we have that
\begin{equation}\label{eq:opt_UB}
 \inf_{\left\{ p_{t} \right\}_{t=1}^{\infty}} \limsup_{t \to \infty} t \tE_{t} \leq \kappa_{\star} \left( a, b \right).
\end{equation}

To show that this is optimal, first note that $\tE_{t} \geq \tfrac{b}{a+b} p_{t}$.
Hence, if
\begin{equation}\label{eq:beating_opt}
 \limsup_{t \to \infty} t \tE_{t} < \kappa_{\star} \left( a, b \right),
\end{equation}
then the corresponding sequence of probabilities $\left\{ p_{t} \right\}_{t=1}^{\infty}$ must satisfy~\eqref{eq:pt_lim} for some $\eps > 0$.
But then Lemma~\ref{lem:map_estimator_two} says that
$\p \left( \MAP \left( Z_{1}, \dots, Z_{t-1}, X_{t} \right) \neq \theta \right) = \omega \left( 1/t \right)$. 
So by~\eqref{eq:error_two_parts} we have that
$\tE_{t} = \omega \left( 1 / t \right)$, which contradicts~\eqref{eq:beating_opt}. Thus the inequality in~\eqref{eq:opt_UB} is, in fact, an equality.
\end{proof}

The rest of this section consists of the proof of Lemma~\ref{lem:map_estimator_two}. 
We start in Section~\ref{sec:map} by introducing notation and making basic observations about the MAP estimator that are useful for both bounds in Lemma~\ref{lem:map_estimator_two}. 
Then we turn to the proof of Lemma~\ref{lem:map_estimator_two}~\eqref{lem:map_ub} in Section~\ref{sec:ub_proof} 
and we conclude with the proof of Lemma~\ref{lem:map_estimator_two}~\eqref{lem:map_lb} in Section~\ref{sec:lb_proof}.

\subsection{The MAP estimator}\label{sec:map} 

In this subsection we introduce some notation and make basic observations about the MAP estimator that are useful for the bounds in Lemma~\ref{lem:map_estimator_two}, 
which is proven subsequently.

For $i \in \left\{ 1, 2 \right\}$, let $\p_{i}$ denote the probability measure conditioned on $\theta = i$, that is, 
$\p_{i} \left( \cdot \right) := \p \left( \cdot \, \middle| \, \theta = i \right)$. 
Similarly, $\E_{i}$ denotes expectation conditioned on $\theta = i$. 
For $i \in \left\{ 1, 2 \right\}$ and $t \geq 1$, denote by $P_{i}^{t}$ the distribution of
$\left( Z_{1}, \dots, Z_{t} \right)$.
That is, for $\left( z_{1}, \dots, z_{t} \right) \in \left\{ 1, 2 \right\}^{t}$, let 
\[
 P_{i}^{t} \left( z_{1}, \dots, z_{t} \right) := \p_{i} \left( Z_{1} = z_{1}, \dots, Z_{t} = z_{t} \right).
\]
Similarly, for $i \in \left\{ 1, 2 \right\}$ and $t \geq 1$, denote by $Q_{i}^{t}$ the distribution of
$\left( Z_{1}, \dots, Z_{t-1}, X_{t} \right)$.
Define also the corresponding likelihoods:
\begin{align*}
 L_{i}^{t} &:= P_{i}^{t} \left( Z_{1}, \dots, Z_{t} \right), \\
 D_{i}^{t} &:= Q_{i}^{t} \left( Z_{1}, \dots, Z_{t-1}, X_{t} \right),
\end{align*}
with $D_{i}^{0} = L_{i}^{0} = 1$ for $i \in \left\{ 1, 2 \right\}$.
An outside observer who records the actions of the first $t$ players can compute the likelihoods $L_{1}^{t}$ and $L_{2}^{t}$,
while player $t$ can compute the likelihoods $D_{1}^{t}$ and $D_{2}^{t}$.
If player $t$ is a Bayesian, then their guess is based on the likelihoods $D_{1}^{t}$ and $D_{2}^{t}$.
Specifically, since the prior on $\theta$ is uniform, we have that
\begin{equation}\label{eq:map_details}
 \MAP \left( Z_{1}, \dots, Z_{t-1}, X_{t} \right) =
 \begin{cases}
  1, & \text{ if } D_{1}^{t} > D_{2}^{t}, \\
  2, & \text{ if } D_{1}^{t} < D_{2}^{t}, \\
  X_{t}, & \text{ if } D_{1}^{t} = D_{2}^{t},
 \end{cases}
\end{equation}
where the last line is due to the tie-breaking rule; 
recall that if the posteriors are equal then a Bayesian follows their private signal.

For $i \in \left\{ 1, 2 \right\}$ and $x \in \left\{ 1, 2 \right\}$, define
\[
 \varphi_{i} \left( x \right) := \frac{a}{a+b} \mathbf{1}_{\left\{ x = i \right\}} + \frac{b}{a+b} \mathbf{1}_{\left\{ x \neq i \right\}},
\]
and note that, since $X_{t}$ is independent of everything else, we have that 
$D_{i}^{t} = L_{i}^{t-1} \varphi_{i} \left( X_{t} \right)$ 
for $i \in \left\{ 1, 2 \right\}$ and $t \geq 1$.
Thus in order to understand the likelihoods $D_{1}^{t}$ and $D_{2}^{t}$, we need to analyze $L_{1}^{t}$ and $L_{2}^{t}$.
Define the $L$-likelihood and the $D$-likelihood ratios as
\[
 R_{t} := \frac{L_{1}^{t}}{L_{2}^{t}} \qquad \text{ and } \qquad R_{t}' := \frac{D_{1}^{t}}{D_{2}^{t}},
\]
respectively.
We can write
\begin{equation}\label{eq:likelihood_ratio_D}
 R_{t}' = R_{t-1} \frac{\varphi_{1} \left( X_{t} \right)}{\varphi_{2} \left( X_{t} \right)}
\end{equation}
and hence we can determine the action of player $t$ for given $R_{t-1}$ and $X_{t}$.
Note that the random variable
$\varphi_{1} \left( X_{t} \right) / \varphi_{2} \left( X_{t} \right)$
takes values in $\left\{ b/a, a/b \right\}$,
and hence we have the following three cases.
\begin{itemize}
 \item If $R_{t-1} < b/a$, then $R_{t}' < 1$ and hence $D_{1}^{t} < D_{2}^{t}$, regardless of the value of $X_{t}$.
 Hence $Z_{t} = 1$ if player $t$ is a revealer and $X_{t} = 1$, and $Z_{t} = 2$ otherwise.
 \item If $R_{t-1} \in \left[ b/a, a/b \right]$, then $Z_{t} = X_{t}$.
 This can be checked by considering both cases.
 If $X_{t} = 1$ then $\varphi_{1} \left( X_{t} \right) / \varphi_{2} \left( X_{t} \right) = a/b$.
 Therefore by~\eqref{eq:likelihood_ratio_D} we have that $R_{t}' \geq 1$ and hence $D_{1}^{t} \geq D_{2}^{t}$.
 If $D_{1}^{t} > D_{2}^{t}$ then $Z_{t} = 1 = X_{t}$ by the definition of the MAP estimator,
 while if $D_{1}^{t} = D_{2}^{t}$ then $Z_{t} = X_{t}$ by the tie-breaking rule.
 The case of $X_{t} = 2$ is analogous.
 \item If $R_{t-1} > a/b$, then $R_{t}' > 1$ and hence $D_{1}^{t} > D_{2}^{t}$, regardless of the value of $X_{t}$.
 Hence $Z_{t} = 2$ if player $t$ is a revealer and $X_{t} = 2$, and $Z_{t} = 1$ otherwise.
\end{itemize}
The three cases above describe how the action of a player depends on their private signal and on the actions of those who acted before them.
This allows us to analyze how 
the $L$-likelihood ratio 
$R_{t}$ evolves.

The probability that the MAP estimator makes an error at time $t$ can be expressed using the $D$-likelihood ratio as follows.
To abbreviate the notation for vectors, we write $z_{1}^{t} \equiv \left( z_{1}, \dots, z_{t} \right)$.
First, conditioning on the value of $\theta$ we obtain that
\begin{equation}\label{eq:map_error_theta}
\p \left( \MAP \left( Z_{1}^{t-1}, X_{t} \right) \neq \theta \right)
=
\frac{1}{2} \p_{1} \left( \MAP \left( Z_{1}^{t-1}, X_{t} \right) = 2 \right)
+
\frac{1}{2} \p_{2} \left( \MAP \left( Z_{1}^{t-1}, X_{t} \right) = 1 \right).
\end{equation}
The two terms on the right hand side of~\eqref{eq:map_error_theta} are equal due to symmetry, so
\begin{equation}\label{eq:map_error_theta1}
\p \left( \MAP \left( Z_{1}^{t-1}, X_{t} \right) \neq \theta \right)
=
\p_{1} \left( \MAP \left( Z_{1}^{t-1}, X_{t} \right) = 2 \right).
\end{equation}
Using~\eqref{eq:map_details} we obtain the following upper and lower bounds:
\[
\p_{1} \left( R_{t}' < 1 \right)
\leq
\p \left( \MAP \left( Z_{1}^{t-1}, X_{t} \right) \neq \theta \right)
\leq
\p_{1} \left( R_{t}' \leq 1 \right).
\]
It is more convenient to work with the $L$-likelihood ratio, so using the fact that
$\tfrac{b}{a} R_{t-1} \leq R_{t}' \leq \tfrac{a}{b} R_{t-1}$
we obtain that
\begin{equation}\label{eq:map_error_bounds}
\p_{1} \left( R_{t-1} < \frac{b}{a} \right)
\leq
\p \left( \MAP \left( Z_{1}^{t-1}, X_{t} \right) \neq \theta \right)
\leq
\p_{1} \left( R_{t-1} \leq \frac{a}{b} \right).
\end{equation}
To obtain parts~\eqref{lem:map_ub} and~\eqref{lem:map_lb} of Lemma~\ref{lem:map_estimator_two} we bound from above and below the probabilities appearing in~\eqref{eq:map_error_bounds}.

\subsection{An upper bound} \label{sec:ub_proof} 

\begin{proof}[Proof of Lemma~\ref{lem:map_estimator_two}~\eqref{lem:map_ub}]
By~\eqref{eq:map_error_bounds} our goal is to show that
\begin{equation}\label{eq:ub_goal}
\p_{1} \left( R_{t} \leq \frac{a}{b} \right) = o \left( \frac{1}{t} \right)
\end{equation}
as $t \to \infty$, and recall that we assume that the revealing probabilities $\left\{ p_{t} \right\}_{t=1}^{\infty}$ are as in~\eqref{eq:pt_opt}.

Let $\left\{ \mathcal{F}_{t} \right\}_{t \geq 0}$ denote the filtration defined by the random variables $\left\{ Z_{t} \right\}_{t \geq 1}$.
Observe that, given $\theta = 1$, the inverse of the $L$-likelihood ratio, $\left\{ 1 / R_{t} \right\}_{t \geq 0}$, is a martingale with respect to $\left\{ \mathcal{F}_{t} \right\}_{t \geq 0}$.
In particular, this implies that
$\E_{1} \left[ R_{t}^{-1} \right] = 1$.
Since $x \mapsto x^{\lambda}$ is a concave function for $x \in \left( 0, \infty \right)$ when $\lambda \in \left[ 0, 1 \right]$, we have that,
given $\theta = 1$, the sequence $\left\{ R_{t}^{-\lambda} \right\}_{t \geq 0}$ is a supermartingale with respect to $\left\{ \mathcal{F}_{t} \right\}_{t \geq 0}$.
Thus 
$\E_{1} \left[ R_{t}^{-\lambda} \, \middle| \, \mathcal{F}_{t-1} \right]
\leq
R_{t-1}^{-\lambda}$. 
We now compute the conditional expectation explicitly: 
\begin{align}
\E_{1} \left[ R_{t}^{-\lambda} \, \middle| \, \mathcal{F}_{t-1} \right]
&=
R_{t-1}^{-\lambda} \sum_{i \in \left\{ 1, 2 \right\}} \p_{1} \left( Z_{t} = i \, \middle| \, \mathcal{F}_{t-1} \right) \left( \frac{\p_{1} \left( Z_{t} = i \, \middle| \, \mathcal{F}_{t-1} \right)}{\p_{2} \left( Z_{t} = i \, \middle| \, \mathcal{F}_{t-1} \right)} \right)^{-\lambda} \notag \\
&=
R_{t-1}^{-\lambda} \sum_{i \in \left\{ 1, 2 \right\}} \p_{1} \left( Z_{t} = i \, \middle| \, \mathcal{F}_{t-1} \right)^{1- \lambda} \p_{2} \left( Z_{t} = i \, \middle| \, \mathcal{F}_{t-1} \right)^{\lambda}. \label{eq:cond_exp_lambda}
\end{align}
The values of the conditional probabilities in~\eqref{eq:cond_exp_lambda} depend on the value of $R_{t-1}$. As described previously, we have three cases:
\begin{equation}\label{eq:p1Zt1}
 \p_{1} \left( Z_{t} = 1 \, \middle| \, \mathcal{F}_{t-1} \right)
 =
 \begin{cases}
  \frac{a}{a+b} p_{t} & \text{ if } R_{t-1} < \frac{b}{a}, \\
  \frac{a}{a+b} & \text{ if } R_{t-1} \in \left[ \frac{b}{a}, \frac{a}{b} \right], \\
  1 - \frac{b}{a+b} p_{t} & \text{ if } R_{t-1} > \frac{a}{b},
 \end{cases}
\end{equation}
and also
\begin{equation}\label{eq:p2Zt1}
 \p_{2} \left( Z_{t} = 1 \, \middle| \, \mathcal{F}_{t-1} \right)
 =
 \begin{cases}
  \frac{b}{a+b} p_{t} & \text{ if } R_{t-1} < \frac{b}{a}, \\
  \frac{b}{a+b} & \text{ if } R_{t-1} \in \left[ \frac{b}{a}, \frac{a}{b} \right], \\
  1 - \frac{a}{a+b} p_{t} & \text{ if } R_{t-1} > \frac{a}{b}.
 \end{cases}
\end{equation}
Plugging these back into~\eqref{eq:cond_exp_lambda} we obtain the conditional expectation in the three cases:
\begin{align}
\E_{1} \left[ \left( \frac{R_{t}}{R_{t-1}} \right)^{-\lambda} \, \middle| \, R_{t-1} < \frac{b}{a} \right]
&= \frac{a^{1-\lambda} b^{\lambda}}{a+b} p_{t} + \left( 1 - \frac{a}{a+b} p_{t} \right)^{1 - \lambda} \left( 1 - \frac{b}{a+b} p_{t} \right)^{\lambda}, \label{eq:cond_exp_small} \\
\E_{1} \left[ \left( \frac{R_{t}}{R_{t-1}} \right)^{-\lambda} \, \middle| \, R_{t-1} \in \left[ \frac{b}{a}, \frac{a}{b} \right] \right]
&= \frac{a^{1-\lambda} b^{\lambda} + a^{\lambda} b^{1 - \lambda}}{a+b}, \label{eq:cond_exp_medium} \\
\E_{1} \left[ \left( \frac{R_{t}}{R_{t-1}} \right)^{-\lambda} \, \middle| \, R_{t-1} > \frac{a}{b} \right]
&= \frac{a^{\lambda} b^{1-\lambda}}{a+b} p_{t} + \left( 1 - \frac{b}{a+b} p_{t} \right)^{1- \lambda} \left( 1 - \frac{a}{a+b} p_{t} \right)^{\lambda}. \label{eq:cond_exp_big}
\end{align}
The right hand side of~\eqref{eq:cond_exp_medium} is strictly less than $1$,
while the right hand sides of~\eqref{eq:cond_exp_small} and~\eqref{eq:cond_exp_big} converge to $1$ as $t \to \infty$.
To estimate the quantities in~\eqref{eq:cond_exp_small} and~\eqref{eq:cond_exp_big}, note that
$\left( 1 - \delta \right)^{\lambda} = 1 - \lambda \delta + \Theta \left( \delta^{2} \right)$ as $\delta \to 0$.
Defining
\[
 f_{\lambda} \equiv f_{\lambda} \left( a, b \right) := \frac{\lambda a + \left( 1 - \lambda \right) b - a^{\lambda} b^{1 - \lambda}}{a+b},
\]
we have from~\eqref{eq:cond_exp_small} --- \eqref{eq:cond_exp_big} that
\begin{align}
\E_{1} \left[ \left( \frac{R_{t}}{R_{t-1}} \right)^{-\lambda} \, \middle| \, R_{t-1} < \frac{b}{a} \right]
&= 1 - f_{1 - \lambda} \left( a, b \right) p_{t} + O \left( p_{t}^{2} \right), \label{eq:cond_exp_small_approx} \\
\E_{1} \left[ \left( \frac{R_{t}}{R_{t-1}} \right)^{-\lambda} \, \middle| \, R_{t-1} \in \left[ \frac{b}{a}, \frac{a}{b} \right] \right]
&=1-\left(f_{\lambda}(a,b)+f_{1-\lambda}(a,b)\right) \label{eq:cond_exp_med_approx}\\
\E_{1} \left[ \left( \frac{R_{t}}{R_{t-1}} \right)^{-\lambda} \, \middle| \, R_{t-1} > \frac{a}{b} \right]
&= 1 - f_{\lambda} \left( a, b \right) p_{t} + O \left( p_{t}^{2} \right), \label{eq:cond_exp_big_approx}
\end{align}
as $t \to \infty$.
On the interval $\lambda \in \left[ 0, 1 \right]$ the function $\lambda \mapsto f_{\lambda}$ is concave and nonnegative with $f_{0} = f_{1} = 0$,
it attains its maximum at
\[
 \lambda_{\star} \equiv \lambda_{\star} \left( a, b \right) := \frac{\log \left( \frac{a/b - 1}{\log a/b} \right)}{\log a/b},
\]
and its maximum value is
\begin{equation}\label{eq:max_value}
 f_{\lambda_{\star}} \left( a, b \right) = \frac{b}{a+b} \cdot \frac{1}{\kappa_{\star} \left( a, b \right)},
\end{equation}
where recall the definition of $\kappa_{\star}$ from~\eqref{eq:kappastar}.
Note also that $\lambda_{\star} \in \left( 1/2, 1 \right)$, due to the fact that $a > b$.

We group the cases of~\eqref{eq:cond_exp_small_approx} and~\eqref{eq:cond_exp_med_approx} together, but treat them separately from the case of~\eqref{eq:cond_exp_big_approx}, which leads to defining the following random sets:
\begin{equation*}
 A_{t} := \left\{ i \in \left[ t \right] : R_{i-1} > \frac{a}{b} \right\},
 \qquad
 \qquad
 B_{t} := \left\{ i \in \left[ t \right] : R_{i-1} \leq \frac{a}{b} \right\}.
\end{equation*}
In words, the set $A_{t}$ is the set of time indices when the $\MAP$ estimator is equal to $1$ regardless of the private signal at this time.
Define also
\[
 R_{t}^{\left( 1 \right)} := \prod_{i \in A_{t}} \frac{R_{i}}{R_{i-1}},
 \qquad
 \qquad
 R_{t}^{\left( 2 \right)} := \prod_{i \in B_{t}} \frac{R_{i}}{R_{i-1}},
\]
and note that
$R_{t} = R_{t}^{\left( 1 \right)} R_{t}^{\left( 2 \right)}$,
since $\left\{ A_{t}, B_{t} \right\}$ is a partition of $\left[ t \right]$.
By~\eqref{eq:cond_exp_big_approx}, we have that 
there exists $C = C \left(  a, b \right)$ such that 
for any $\lambda \in \left[ 0, 1 \right]$ we have that 
\begin{equation}\label{eq:cond_exp_big_rewrite}
\E_{1} \left[ \left( \frac{R_{t}}{R_{t-1}} \right)^{-\lambda}  e^{f_{\lambda} p_{t}} \, \middle| \, R_{t-1}, t \in A_{t} \right]
\leq e^{C p_{t}^{2}}.
\end{equation}
Similarly, by~\eqref{eq:cond_exp_small_approx}, 
together with the fact that 
the right hand side of~\eqref{eq:cond_exp_small_approx} is greater than 
the right hand side of~\eqref{eq:cond_exp_med_approx} for all $t$ large enough, 
we have that there exists $C' = C' \left( a, b \right)$ such that 
for any $\lambda \in \left[ 0, 1 \right]$ we have that 
\begin{equation}\label{eq:cond_exp_small_rewrite}
\E_{1} \left[ \left( \frac{R_{t}}{R_{t-1}} \right)^{-\lambda}  e^{f_{1-\lambda} p_{t}} \, \middle| \, R_{t-1}, t \in B_{t} \right]
\leq e^{C' p_{t}^{2}}.
\end{equation}
Let $M_{t} := \sum_{i=1}^{t} p_{i}$,
and note that by the choice of $\left\{ p_{t} \right\}_{t=1}^{\infty}$,
together with~\eqref{eq:max_value},
we have that
\begin{equation}\label{eq:Mt_asymptotics}
M_{t} = \left( 1 + o \left( 1 \right) \right) \left( 1 + \eps \right) \frac{a+b}{b} \kappa_{\star} \left( a, b \right) \log t
= \left( 1 + o \left( 1 \right) \right) \frac{ \left( 1 + \eps \right)}{f_{\lambda_{\star}} \left( a, b \right)} \log t
\end{equation}
as $t \to \infty$.
Define also the random variable
$\Gamma_{t} := \sum_{i \in A_{t}} p_{i}$.
Putting together~\eqref{eq:cond_exp_big_rewrite} and~\eqref{eq:cond_exp_small_rewrite}, 
it follows by induction that 
there exists $C = C \left( a, b \right)$ such that 
for any $\lambda_{1}, \lambda_{2} \in \left[0,1\right]$ we have that 
\begin{equation}\label{eq:exp_joint_lambda12}
\E_{1} \left[ \left( R_{t}^{\left( 1 \right)} \right)^{-\lambda_{1}} e^{f_{\lambda_{1}} \Gamma_{t}} \left( R_{t}^{\left( 2 \right)} \right)^{-\lambda_{2}} e^{f_{1-\lambda_{2}} \left( M_{t} - \Gamma_{t} \right)} \right]
\leq \exp \left( C \sum_{i=1}^{t} p_{i}^{2} \right).
\end{equation}
Since $\sum_{i=1}^{\infty} p_{i}^{2} < \infty$,
the expectation in~\eqref{eq:exp_joint_lambda12} is bounded above by a constant independent of $t$.
That is, there exists $C_{0} = C_{0} \left( a, b \right) < \infty$ such that
\begin{equation}\label{eq:exp_joint_lambda12_C0}
\E_{1} \left[ \left( R_{t}^{\left( 1 \right)} \right)^{-\lambda_{1}} e^{f_{\lambda_{1}} \Gamma_{t}} \left( R_{t}^{\left( 2 \right)} \right)^{-\lambda_{2}} e^{f_{1-\lambda_{2}} \left( M_{t} - \Gamma_{t} \right)} \right]
\leq C_{0}.
\end{equation}

Next, we claim that $R_{t} \leq \tfrac{a}{b}$ implies that
\begin{equation}\label{eq:Rt1leq1}
R_{t}^{\left( 1 \right)} \leq 1,
\end{equation}
and furthermore that there exists a constant $C_{1} < \infty$ such that
\begin{equation}\label{eq:Rt1_cantbetoosmall}
 \p_{1} \left( R_{t}^{\left( 1 \right)} \leq t^{-C_{1}} \right) \leq t^{-2}.
\end{equation}
We defer the proofs of both of these claims to Appendix~\ref{sec:computations_proofs}.

From~\eqref{eq:Rt1leq1} we get the following bound on the probability of interest:
\begin{equation}\label{eq:ub_two_cases}
\p_{1} \left( R_{t} \leq \frac{a}{b} \right)
\leq
\p_{1} \left( R_{t} \leq \frac{a}{b}, \ t^{-C_{1}} \leq R_{t}^{\left( 1 \right)} \leq 1 \right)
+
\p_{1} \left( R_{t}^{\left( 1 \right)} \leq t^{-C_{1}} \right).
\end{equation}
By~\eqref{eq:Rt1_cantbetoosmall} the second term is at most $t^{-2}$,
so in order to show~\eqref{eq:ub_goal} it suffices to bound from above the first term in the display above. 
We can break the event
$\left\{ R_{t} \leq \frac{a}{b}, \ t^{-C_{1}} \leq R_{t}^{\left( 1 \right)} \leq 1 \right\}$
into subevents based on the value of $R_{t}^{\left( 1 \right)}$.
Recall that
$R_{t} = R_{t}^{\left( 1 \right)} R_{t}^{\left( 2 \right)}$,
so if
$R_{t}^{\left( 1 \right)} \in \left[ e^{-\left( x + 1 \right)} , e^{-x} \right]$
and
$R_{t} \leq \tfrac{a}{b}$,
then
$R_{t}^{\left( 2 \right)} \leq \tfrac{a}{b} e^{x+1}$.
Letting
$C_{3} := 1 + \log \tfrac{a}{b}$
we obtain the bound
\begin{align}
\p_{1} \left( R_{t} \leq \frac{a}{b}, \ t^{-C_{1}} \leq R_{t}^{\left( 1 \right)} \leq 1 \right)
&\leq \sum_{x=0}^{C_{1} \log t} \p_{1} \left( R_{t}^{\left( 1 \right)} \in \left[ e^{- \left( x + 1 \right)} , e^{-x} \right], \ R_{t}^{\left( 2 \right)} \leq e^{x+C_{3}} \right) \notag \\
&\leq \sum_{x=0}^{C_{1} \log t} \p_{1} \left( R_{t}^{\left( 1 \right)} \leq e^{-x} , \ R_{t}^{\left( 2 \right)} \leq e^{x+C_{3}} \right). \label{eq:sum_x_bound}
\end{align}
We estimate each term in this sum. First, we can rewrite this probability as follows:
\begin{multline*}
\p_{1} \left( R_{t}^{\left( 1 \right)} \leq e^{-x} , \ R_{t}^{\left( 2 \right)} \leq e^{x+C_{3}} \right)
= \p_{1} \left( \left( R_{t}^{\left( 1 \right)} \right)^{-\lambda_{\star}} \geq e^{\lambda_{\star} x}, \ \left( R_{t}^{\left( 2 \right)} \right)^{- \left( 1 - \lambda_{\star} \right)} \geq e^{ - \left( 1 - \lambda_{\star} \right) \left( x + C_{3} \right)} \right) \\
= \p_{1} \left( \left( R_{t}^{\left( 1 \right)} \right)^{-\lambda_{\star}} e^{f_{\lambda_{\star}} \Gamma_{t}} \geq e^{\lambda_{\star} x + f_{\lambda_{\star}} \Gamma_{t}}, \ \left( R_{t}^{\left( 2 \right)} \right)^{- \left( 1 - \lambda_{\star} \right)} e^{f_{\lambda_{\star}} \left( M_{t} - \Gamma_{t} \right)} \geq e^{ - \left( 1 - \lambda_{\star} \right) \left( x + C_{3} \right) + f_{\lambda_{\star}} \left( M_{t} - \Gamma_{t} \right)} \right).
\end{multline*}
If both inequalities hold in the display above, then also the product of the expressions on the left hand sides is greater than or equal to the product of the expressions on the right hand sides. We thus obtain the following bound:
\begin{multline*}
\p_{1} \left( R_{t}^{\left( 1 \right)} \leq e^{-x} , \ R_{t}^{\left( 2 \right)} \leq e^{x+C_{3}} \right)  \\
\leq
\p_{1} \left( \left( R_{t}^{\left( 1 \right)} \right)^{-\lambda_{\star}} e^{f_{\lambda_{\star}} \Gamma_{t}}  \left( R_{t}^{\left( 2 \right)} \right)^{- \left( 1 - \lambda_{\star} \right)} e^{f_{\lambda_{\star}} \left( M_{t} - \Gamma_{t} \right)}   \geq  e^{ \left( 2 \lambda_{\star} - 1 \right) x + f_{\lambda_{\star}} M_{t} - \left( 1 - \lambda_{\star} \right) C_{3} }   \right).
\end{multline*}
Using Markov's inequality, together with~\eqref{eq:exp_joint_lambda12_C0} with $\lambda_{1} = \lambda_{\star}$ and $\lambda_{2} = 1 - \lambda_{\star}$, we obtain that 
\begin{align}
\p_{1} \left( R_{t}^{\left( 1 \right)} \leq e^{-x} , \ R_{t}^{\left( 2 \right)} \leq e^{x+C_{3}} \right)
&\leq
C_{0} e^{\left( 1 - \lambda_{\star} \right) C_{3}} \exp \left\{ - \left( 2 \lambda_{\star} - 1 \right) x - f_{\lambda_{\star}} M_{t} \right\} \notag \\
&\leq
C_{0} e^{\left( 1 - \lambda_{\star} \right) C_{3}} \exp \left\{ - f_{\lambda_{\star}} M_{t} \right\}, \label{eq:final_bound_of_term}
\end{align}
where the second inequality follows from the facts that $x \geq 0$ and $\lambda_{\star} > 1/2$.
Recalling from~\eqref{eq:Mt_asymptotics} that
$f_{\lambda_{\star}} M_{t} = \left( 1 + o \left( 1 \right) \right) \left( 1 + \eps \right) \log t$,
and using~\eqref{eq:sum_x_bound} and~\eqref{eq:final_bound_of_term}, we arrive at the following bound:
\[
\p_{1} \left( R_{t} \leq \frac{a}{b}, \ t^{-C_{1}} \leq R_{t}^{\left( 1 \right)} \leq 1 \right)
\leq
\frac{C_{4} \log t}{t^{\left( 1 + o \left( 1 \right) \right) \left( 1 + \eps \right)}}
\]
for some constant $C_{4} < \infty$.
Putting this together with~\eqref{eq:ub_two_cases} and~\eqref{eq:Rt1_cantbetoosmall} we obtain~\eqref{eq:ub_goal}.
\end{proof}

\subsection{A lower bound} \label{sec:lb_proof} 

\begin{proof}[Proof of Lemma~\ref{lem:map_estimator_two}~\eqref{lem:map_lb}]
By~\eqref{eq:map_error_bounds} our goal is to show that
\begin{equation}\label{eq:lb_goal}
\p_{1} \left( R_{t} < \frac{b}{a} \right) = \omega \left( \frac{1}{t} \right)
\end{equation}
as $t \to \infty$, and recall that we assume that the revealing probabilities $\left\{ p_{t} \right\}_{t=1}^{\infty}$ satisfy~\eqref{eq:pt_lim}.
As in the proof of the upper bound, let
$M_{t} := \sum_{i=1}^{t} p_{i}$
and note that~\eqref{eq:pt_lim} implies that
\begin{equation}\label{eq:Mt_ub}
M_{t}
\leq \left( 1 + o \left( 1 \right) \right) \left( 1 - \eps \right) \frac{a+b}{b} \kappa_{\star} \left( a , b \right) \log t
= \left( 1 + o \left( 1 \right) \right) \frac{1-\eps}{f_{\lambda_{\star}} \left( a, b \right)} \log t
\end{equation}
as $t \to \infty$,
where recall the definition of $f_{\lambda}$ and $\lambda_{\star}$ from Section~\ref{sec:ub_proof}. We may also assume that
\begin{equation}\label{eq:Mt_atleast_logt}
M_{t} \geq \delta \log t
\end{equation}
for all $t$ large enough, where
$\delta \equiv \delta \left( a, b \right) := \tfrac{1}{2 \log \left( 1 + a/b \right)}$;
if $M_{t} < \delta \log t$ then a simple argument shows that~\eqref{eq:goal_omega1t} holds, which we defer to Appendix~\ref{sec:small_Mt}.
Define also
$
\tau \left( s \right) := \min \left\{ t \geq 1 : M_{t} \geq s \right\},
$
and note that~\eqref{eq:Mt_ub} and~\eqref{eq:Mt_atleast_logt} imply that
$
\exp \left( \left( 1 + o \left( 1 \right) \right) \tfrac{f_{\lambda_{\star}}}{1-\eps} s \right)
\leq
\tau \left( s \right)
\leq
\exp \left( s / \delta \right)
$
as $s \to \infty$. In particular $\tau \left( s \right) < \infty$ for every $s < \infty$.

In order to show~\eqref{eq:lb_goal}, we define three events that together imply that $R_{t} < b/a$ and show that the probability that they all occur, given $\theta = 1$, is $\omega \left( 1 / t \right)$ as $t \to \infty$.
First, let
$t_{0} := \tau \left( 2 \tfrac{a+b}{a-b} \log\left( a/b \right) + 2 \right)$
and define
\begin{equation}\label{eq:initial_event}
\mathcal{E}_{0} := \left\{ R_{t_{0}} \leq \left( \tfrac{b}{a} \right)^{4} \right\}.
\end{equation}
This initial event takes the $L$-likelihood ratio below $b/a$, and the events we now define ensure that it stays below $b/a$.
Let $J_{t} := \log R_{t}$ denote the $L$-log-likelihood ratio and define the stopping time
$T := \min \left\{ s \geq t_{0} : J_{s} \notin \left[ - \log t, 2 \log \tfrac{b}{a} \right] \right\}$.
Define now the events
\begin{align*}
\mathcal{E}_{1} &:= \left\{ J_{T} \leq - \log t \right\}, \\
\mathcal{E}_{2} &:= \left\{ \min_{s \in \left[ t \right]} J_{s} \geq - \log^{3/4} t \right\}.
\end{align*}
Observe that $\mathcal{E}_{0}$, $\mathcal{E}_{1}$, and $\mathcal{E}_{2}$ together imply that
$J_{s} \in \left[ - \log^{3/4} t, 2 \log \tfrac{b}{a} \right]$
for all $s \in \left[ t_{0}, t \right]$.
In particular, they imply that $R_{t} < b/a$ and so
\begin{equation}\label{eq:lb_events}
\p_{1} \left( R_{t} < \frac{b}{a} \right) \geq \p_{1} \left( \mathcal{E}_{0} \cap \mathcal{E}_{1} \cap \mathcal{E}_{2} \right).
\end{equation}
In what follows we show that the right hand side of the display above is $\omega \left( 1 / t \right)$ as $t \to \infty$.

We start with the initial event $\mathcal{E}_{0}$.
Note that the first two individuals follow their private signal, that is, $Z_{1} = X_{1}$ and $Z_{2} = X_{2}$, and
hence if $X_{1} = X_{2} = 2$, then $R_{2} = \left( b / a \right)^{2}$.
If $X_{i} = 2$ for all $i \in \left\{ 3, 4, \dots, t_{0} \right\}$
then also $Z_{i} = 2$ for all $i \in \left\{ 3, 4, \dots, t_{0} \right\}$,
regardless of whether the corresponding players are Bayesians or revealers.
Consequently, by~\eqref{eq:p1Zt1} and~\eqref{eq:p2Zt1}, in this event the $L$-likelihood ratio at time $t_{0}$ is equal to
\begin{equation}\label{eq:Rt0}
R_{t_{0}} = \left( \frac{b}{a} \right)^{2} \prod_{i=3}^{t_{0}} \frac{1 - \frac{a}{a+b} p_{i}}{1 - \frac{b}{a+b} p_{i}}.
\end{equation}
Now using
\[
\frac{1 - \frac{a}{a+b} p_{i}}{1 - \frac{b}{a+b} p_{i}}
= 1 - \frac{\frac{a-b}{a+b} p_{i}}{1 - \frac{b}{a+b} p_{i}}
\leq 1 - \frac{a-b}{a+b} p_{i}
\leq e^{ - \frac{a-b}{a+b} p_{i} },
\]
and also, by the definition of $t_{0}$, the fact that
\[
\sum_{i=3}^{t_{0}} p_{i} \geq M_{t_{0}} - 2 \geq 2 \frac{a+b}{a-b} \log \frac{a}{b},
\]
we obtain from~\eqref{eq:Rt0} that $R_{t_{0}} \leq \left( b / a \right)^{4}$.
Hence
\[
\p_{1} \left( \mathcal{E}_{0} \right) \geq \p_{1} \left( X_{i} = 2 \ \forall i \in \left[ t_{0} \right] \right) = \left( \frac{b}{a+b} \right)^{t_{0}}.
\]
Since $t_{0}$ is a constant, we have that $\p_{1} \left( \mathcal{E}_{0} \right)$ is strictly bounded away from zero. Thus by~\eqref{eq:lb_events} it suffices to show that
\begin{equation}\label{eq:events1and2}
\p_{1} \left( \mathcal{E}_{1} \cap \mathcal{E}_{2} \, \middle| \, \mathcal{E}_{0} \right)
= \omega \left( \frac{1}{t} \right)
\end{equation}
as $t \to \infty$.

We now turn to estimating the probability of $\mathcal{E}_{1}$ and $\mathcal{E}_{2}$, given $\mathcal{E}_{0}$.
Note that, given $\mathcal{E}_{0}$, the $L$-log-likelihood ratio performs a random walk from time $t_{0}$ until the stopping time $T$.
Specifically, for $s \geq t_{0}$ we can write
\[
J_{s \wedge T} = J_{t_{0}} + \sum_{i = t_{0} + 1}^{s \wedge T} \xi_{i},
\]
where the random variables $\left\{ \xi_{i} \right\}_{i > t_{0}}$ are independent (of each other and everything else) with the following distribution under $\p_{1}$:
\begin{align*}
\p_{1} \left( \xi_{i} = \log \frac{a}{b} \right) &= \frac{a}{a+b} p_{i}, \\
\p_{1} \left( \xi_{i} = \log \frac{1 - \frac{a}{a+b} p_{i}}{1 - \frac{b}{a+b} p_{i}} \right) &= 1 - \frac{a}{a+b} p_{i}.
\end{align*}
Note, in particular, that the random variables $\left\{ \xi_{i} \right\}_{i > t_{0}}$ are uniformly bounded: $\log \tfrac{b}{a} \leq \xi_{i} \leq \log \tfrac{a}{b}$ for all $i > t_{0}$.
Furthermore, we have that
\[
\E_{1} \left[ \xi_{i} \right] = \frac{1 + \frac{a}{b} \left( \log \frac{a}{b} - 1 \right)}{1 + \frac{a}{b}} p_{i} + \Theta \left( p_{i}^{2} \right),
\]
showing that the $L$-log-likelihood ratio has an upward drift in this regime.
We perform a change of measure to remove this drift:
define $\wt{\p}_{1}$ such that under $\wt{\p}_{1}$ the random variables $\left\{ \xi_{i} \right\}_{i > t_{0}}$ have expectation zero.
That is, define $\wt{\p}_{1}$ such that for all $i > t_{0}$ we have that
\begin{align*}
\wt{\p}_{1} \left( \xi_{i} = \log \frac{a}{b} \right) &= q_{i}, \\
\wt{\p}_{1} \left( \xi_{i} = \log \frac{1 - \frac{a}{a+b} p_{i}}{1 - \frac{b}{a+b} p_{i}} \right) &= 1 - q_{i},
\end{align*}
where $q_{i}$ is chosen such that $\wt{\E}_{1} \left[ \xi_{i} \right] = 0$. A short computation gives that
\[
q_{i} = \frac{\log \frac{1 - \frac{b}{a+b} p_{i}}{1 - \frac{a}{a+b} p_{i}}}{\log \left( \frac{a}{b} \cdot \frac{1 - \frac{b}{a+b} p_{i}}{1 - \frac{a}{a+b} p_{i}} \right)}.
\]
In the following we first estimate~$\wt{\p}_{1} \left( \mathcal{E}_{1} \cap \mathcal{E}_{2} \, \middle| \, \mathcal{E}_{0} \right)$
and then show~\eqref{eq:events1and2} by understanding the Radon-Nikodym derivative of $\p_{1} \left( \cdot \, \middle| \, \mathcal{E}_{0} \right)$ with respect to $\wt{\p}_{1} \left( \cdot \, \middle| \, \mathcal{E}_{0} \right)$.

Under $\wt{\p}_{1}$ we have that $\left\{ J_{s \wedge T} \right\}_{s \geq t_{0}}$ is a martingale.
By the optional stopping theorem we thus have that
\begin{equation}\label{eq:optional_stopping}
\wt{\E}_{1} \left[ J_{T} \, \middle| \, \mathcal{E}_{0} \right]
=
\wt{\E}_{1} \left[ J_{t_{0}} \right].
\end{equation}
By the definition of $T$ we have that
either $J_{T} > 2 \log \tfrac{b}{a}$, in which case $J_{T} \in ( 2 \log \tfrac{b}{a}, \log \tfrac{b}{a} ]$,
or $J_{T} < - \log t$, in which case $J_{T} \in [ - \log t - \log \tfrac{a}{b}, - \log t )$.
Hence
we have that
\begin{align}
\wt{\E}_{1} \left[ J_{T} \, \middle| \, \mathcal{E}_{0} \right]
&=
\wt{\E}_{1} \left[ J_{T} \mathbf{1}_{\left\{ J_{T} < - \log t \right\}} \, \middle| \, \mathcal{E}_{0} \right]
+
\wt{\E}_{1} \left[ J_{T} \mathbf{1}_{\left\{ J_{T} > 2 \log \left( b / a \right) \right\}} \, \middle| \, \mathcal{E}_{0} \right] \notag \\
&\geq
\wt{\p}_{1} \left( \mathcal{E}_{1} \, \middle| \, \mathcal{E}_{0} \right) \times \left( - \log t - \log \tfrac{a}{b} \right) + \left( 1 - \wt{\p}_{1} \left( \mathcal{E}_{1} \, \middle| \, \mathcal{E}_{0} \right) \right) \times 2 \log \tfrac{b}{a} \notag \\
&\geq
2 \log \tfrac{b}{a} + \wt{\p}_{1} \left( \mathcal{E}_{1} \, \middle| \, \mathcal{E}_{0} \right) \times \left( - \log t \right). \label{eq:exp_logt}
\end{align}
From the definition of $t_{0}$ we also have that
\begin{equation}\label{eq:exp_Jt0}
\wt{\E}_{1} \left[ J_{t_{0}} \right] \leq 4 \log \frac{b}{a}.
\end{equation}
Thus putting together~\eqref{eq:optional_stopping},~\eqref{eq:exp_logt}, and~\eqref{eq:exp_Jt0} we obtain that
\begin{equation}\label{eq:E1}
\wt{\p}_{1} \left( \mathcal{E}_{1} \, \middle| \, \mathcal{E}_{0} \right) \geq \frac{2 \log \frac{a}{b}}{\log t}.
\end{equation} 
Applying Theorem 1.6 in~\cite{freedman} we have that 
\begin{equation*}
\wt{\p}_{1} \left( \min_{s \in \left[ t \right]} J_{s} < - \log^{3/4} t \, \middle| \, \mathcal{E}_{0} \right) \leq \exp \left( - c \log^{1/2} t \right)
\end{equation*}
for some constant $c > 0$. Together with~\eqref{eq:E1} this shows that
\begin{equation}\label{eq:E1E2_ptilde}
\wt{\p}_{1} \left( \mathcal{E}_{1} \cap \mathcal{E}_{2} \, \middle| \, \mathcal{E}_{0} \right)
\geq
\frac{\log \frac{a}{b}}{\log t}
\end{equation}
for all $t$ large enough.

Since we want to show~\eqref{eq:events1and2}, what remains is to estimate the Radon-Nikodym derivative of
$\p_{1} \left( \cdot \, \middle| \, \mathcal{E}_{0} \right)$
with respect to
$\wt{\p}_{1} \left( \cdot \, \middle| \, \mathcal{E}_{0} \right)$
on the sigma-algebra $\mathcal{F}_{t}$ and on the event $\mathcal{E}_{1} \cap \mathcal{E}_{2}$.
From the definition of $\wt{\p}_{1}$ we can write this down explicitly: we have that
\begin{equation}\label{eq:RN_prod}
\left. \frac{\mathrm{d} \p_{1} \left( \cdot \, \middle| \, \mathcal{E}_{0} \right)}{\mathrm{d} \wt{\p}_{1} \left( \cdot \, \middle| \, \mathcal{E}_{0} \right)} \right|_{\mathcal{F}_{t}}
\mathbf{1}_{\left\{ \mathcal{E}_{1} \cap \mathcal{E}_{2} \right\}}
=
\prod_{i=t_{0}+1}^{t} \left\{ \frac{\tfrac{a}{a+b} p_{i}}{q_{i}} \mathbf{1}_{\left\{ \xi_{i} = \log \frac{a}{b} \right\}} + \frac{1 - \tfrac{a}{a+b} p_{i}}{1 - q_{i}} \mathbf{1}_{\left\{ \xi_{i} = \log \frac{1 - \frac{a}{a+b} p_{i}}{1 - \frac{b}{a+b} p_{i}} \right\}} \right\}
\mathbf{1}_{\left\{ \mathcal{E}_{1} \cap \mathcal{E}_{2} \right\}}.
\end{equation}
We estimate each factor in this product. Specifically, we claim that there exists a constant $C = C \left( a, b \right)$ such that for every $i \in \left\{ t_{0} + 1, \dots, t \right\}$ the following holds:
\begin{equation}\label{eq:RN_estimate_i}
\frac{\tfrac{a}{a+b} p_{i}}{q_{i}} \mathbf{1}_{\left\{ \xi_{i} = \log \frac{a}{b} \right\}} + \frac{1 - \tfrac{a}{a+b} p_{i}}{1 - q_{i}} \mathbf{1}_{\left\{ \xi_{i} = \log \frac{1 - \frac{a}{a+b} p_{i}}{1 - \frac{b}{a+b} p_{i}} \right\}}
\geq
\exp \left( \left( 1 - \lambda_{\star} \right) \xi_{i} - f_{\lambda_{\star}} p_{i} - C p_{i}^{2} \right).
\end{equation}
This inequality can be checked for both potential values of $\xi_{i}$ by expanding the expressions in $p_{i}$.
When $\xi_{i} = \log \frac{1 - \frac{a}{a+b} p_{i}}{1 - \frac{b}{a+b} p_{i}}$, both sides of~\eqref{eq:RN_estimate_i} are equal to
$1 + \frac{a - b - a \log \frac{a}{b}}{\left( a + b \right) \log \frac{a}{b}} p_{i} + \Theta \left( p_{i}^{2} \right)$,
so by choosing $C$ large enough, the quadratic term on the right hand side will be smaller than that on the left hand side,
and hence~\eqref{eq:RN_estimate_i} holds in this case if $C$ is large enough.
When $\xi_{i} = \log \frac{a}{b}$,
the left hand side of~\eqref{eq:RN_estimate_i} is equal to
$\frac{a \log \frac{a}{b}}{a - b} + \frac{a \log \frac{a}{b}}{a - b} \left( \frac{a - b}{\left( a + b \right) \log \frac{a}{b}} - \frac{1}{2} \right) p_{i} + \Theta \left( p_{i}^{2} \right)$,
while the right hand side of~\eqref{eq:RN_estimate_i} is equal to
$\frac{a \log \frac{a}{b}}{a - b} - \frac{a \log \frac{a}{b}}{a - b}  f_{\lambda_{\star}} p_{i} + \Theta \left( p_{i}^{2} \right)$.
Thus the constant terms are equal and it can be checked that the coefficient of the first order term is greater for the expression on the left hand side than for the expression on the right hand side.
Hence in this case~\eqref{eq:RN_estimate_i} holds for all $i$ large enough regardless of the value of $C$, and if $C$ is chosen large enough then it holds for all $i \in \left\{ t_{0} + 1, \dots, t \right\}$.

After justifying~\eqref{eq:RN_estimate_i}, we can now turn back to estimating the quantity in~\eqref{eq:RN_prod} by multiplying~\eqref{eq:RN_estimate_i} over all $i \in \left\{ t_{0} + 1, \dots, t \right\}$.
Using the fact that $J_{t} = J_{t_{0}} + \sum_{i=t_{0}+1}^{t} \xi_{i}$ on the event $\mathcal{E}_{1} \cap \mathcal{E}_{2}$, and recalling also that $M_{t} = \sum_{i=1}^{t} p_{i}$, we obtain that
\begin{align}
\left. \frac{\mathrm{d} \p_{1} \left( \cdot \, \middle| \, \mathcal{E}_{0} \right)}{\mathrm{d} \wt{\p}_{1} \left( \cdot \, \middle| \, \mathcal{E}_{0} \right)} \right|_{\mathcal{F}_{t}}
\mathbf{1}_{\left\{ \mathcal{E}_{1} \cap \mathcal{E}_{2} \right\}}
&\geq
\exp \left( \left( 1 - \lambda_{\star} \right) \left( J_{t} - J_{t_{0}} \right) - f_{\lambda_{\star}} \left( M_{t} - M_{t_{0}} \right) - C \sum_{i=t_{0}+1}^{t} p_{i}^{2} \right)
\mathbf{1}_{\left\{ \mathcal{E}_{1} \cap \mathcal{E}_{2} \right\}}  \notag \\
&\geq
\exp \left( \left( 1 - \lambda_{\star} \right) J_{t} - f_{\lambda_{\star}} M_{t} - C' \right)
\mathbf{1}_{\left\{ \mathcal{E}_{1} \cap \mathcal{E}_{2} \right\}}, \label{eq:RN_estimate}
\end{align}
where
$C' := C \sum_{i=1}^{\infty} p_{i}^{2}$,
and where in the second inequality we used that $J_{t_{0}} < 0$ (given $\mathcal{E}_{0}$) and that $\sum_{i=1}^{\infty} p_{i}^{2} < \infty$ due to the assumption~\eqref{eq:pt_lim}.
Now recall from~\eqref{eq:Mt_ub} that
$f_{\lambda_{\star}} M_{t} \leq \left( 1 + o \left( 1 \right) \right) \left( 1 - \eps \right) \log t$
as $t \to \infty$.
Recall also that on the event $\mathcal{E}_{2}$ we have that
$J_{t} \geq - \log^{3/4} t$.
Plugging these two estimates into the right hand side of~\eqref{eq:RN_estimate} we obtain that
\begin{equation*}
\left. \frac{\mathrm{d} \p_{1} \left( \cdot \, \middle| \, \mathcal{E}_{0} \right)}{\mathrm{d} \wt{\p}_{1} \left( \cdot \, \middle| \, \mathcal{E}_{0} \right)} \right|_{\mathcal{F}_{t}}
\mathbf{1}_{\left\{ \mathcal{E}_{1} \cap \mathcal{E}_{2} \right\}}
\geq
\exp \left( - \left( 1 - \lambda_{\star} \right) \log^{3/4} t - C' \right) \times t^{ - \left( 1 + o \left( 1 \right) \right) \left( 1 - \eps \right)}
\mathbf{1}_{\left\{ \mathcal{E}_{1} \cap \mathcal{E}_{2} \right\}}.
\end{equation*}
Consequently we obtain that
\begin{align*}
\p_{1} \left( \mathcal{E}_{1} \cap \mathcal{E}_{2} \, \middle| \, \mathcal{E}_{0} \right)
&=
\wt{\E}_{1} \left[ \left. \frac{\mathrm{d} \p_{1} \left( \cdot \, \middle| \, \mathcal{E}_{0} \right)}{\mathrm{d} \wt{\p}_{1} \left( \cdot \, \middle| \, \mathcal{E}_{0} \right)} \right|_{\mathcal{F}_{t}}
\mathbf{1}_{\left\{ \mathcal{E}_{1} \cap \mathcal{E}_{2} \right\}} \, \middle| \, \mathcal{E}_{0} \right] \\
&\geq
\exp \left( - \left( 1 - \lambda_{\star} \right) \log^{3/4} t - C' \right) \times t^{ - \left( 1 + o \left( 1 \right) \right) \left( 1 - \eps \right)} \times
\wt{\p}_{1} \left( \mathcal{E}_{1} \cap \mathcal{E}_{2} \, \middle| \, \mathcal{E}_{0} \right)\\
&\geq
\frac{\log \frac{a}{b}}{\log t} \times \exp \left( - \left( 1 - \lambda_{\star} \right) \log^{3/4} t - C' \right) \times t^{ - \left( 1 + o \left( 1 \right) \right) \left( 1 - \eps \right)}
= \omega \left( \frac{1}{t} \right),
\end{align*}
where in the second inequality we used~\eqref{eq:E1E2_ptilde}.
This concludes the proof of~\eqref{eq:events1and2} as desired.
\end{proof}

%
%
%
%
%
%
%
%
%
%

\section{Conclusions and future work} \label{sec:future} 

In this paper we have shown that wrong information cascades can be \emph{broken} by a small number of individuals who disregard the actions of others and only follow their private signal,
leading to asymptotic learning of the correct action.
Moreover, we determined precisely the optimal asymptotic rate of decay of the error probability at a given time $t$.

This paper initiates a larger investigation into a broad family of problems that probe the fragility of information cascades.
We collect here some natural directions for future work.

\begin{itemize}
\item \textbf{Alternative objectives.} While asymptotic learning is the primary objective, once this is achieved one can ask to minimize various notions of error. In this paper we focused on the optimal asymptotic rate at which the error probability $\tE_{t}$ can go to zero, but other notions of error are natural to consider as well. For instance, what is the expected number of errors until time $t$ and how can this be minimized? That is, what is
\[
 \mathrm{NE}_{t} \equiv \mathrm{NE}_{t} \left( a, b \right) := \inf_{\left\{ p_{i} \right\}_{i=1}^{t}} \E \left[ \sum_{i=1}^{t} \mathbf{1}_{\left\{ Z_{i} \neq \theta \right\}} \right]
\]
and what are the optimal revealing probabilities to achieve this? Theorem~\ref{thm:main_two} implies that
$\mathrm{NE}_{t} \leq \left( 1 + o \left( 1 \right) \right) \kappa_{\star} \log t$.
We conjecture that $\mathrm{NE}_{t} = \left( 1 + o \left( 1 \right) \right) c \log t$ for some constant $c$ such that $0 < c < \kappa_{\star}$.

\item \textbf{Alternative behavioral models.} In this paper we considered a particular model: each player is either a Bayesian or follows their private signal blindly. More generally, one can consider any (causal) behavioral model that deviates from a pure Bayesian model. How do various notions of error depend on the model specifics?

\item \textbf{More general setups.} In this paper we considered the simplest possible setup: two possible states of the world, a uniform prior over them, with each state corresponding to a distribution over two possible private signals, which are in a natural bijection with the possible states of the world.
More generally one can ask the same questions with $k$ possible states of the world, a general prior over them, and each possible state of the world corresponding to a distribution over $\ell$ possible private signals.

\item \textbf{Unknown parameters.} We assumed that the players know the revealing probabilities $\left\{ p_{t} \right\}_{t=1}^{\infty}$. What if these are unknown, or if the players believe that they are $\left\{ p_{t} \right\}_{t=1}^{\infty}$ when they are actually $\left\{ q_{t} \right\}_{t=1}^{\infty}$? Does asymptotic learning occur? If so, what is the optimal learning rate?


\end{itemize}

We suspect that, for many of these problems, the results and techniques of this paper will be useful in determining the correct order of magnitude for the relevant quantities.
However, just like in this paper, determining the precise constants will require a deeper understanding of the specific problem of interest.


\section*{Acknowledgements}

We thank Louigi Addario-Berry for helpful comments. 


\bibliographystyle{abbrv}
\bibliography{bib}


\appendix

\section{Auxiliary proofs} \label{sec:computations_proofs} 

\subsection{Proof of~\eqref{eq:Rt1leq1}} \label{sec:Rt1leq1} 

To prove~\eqref{eq:Rt1leq1}, first note that the set $A_{t}$ is either empty, in which case $R_{t}^{\left( 1 \right)} = 1$,
or it can be decomposed into blocks of consecutive integers.
That is, there exist integers
$\ell_{1}, u_{1}, \ell_{2}, u_{2}, \dots, \ell_{k}, u_{k}$
such that
$\ell_{i} \leq u_{i}$
for every $i \in \left[ k \right]$,
$u_{i} + 1 < \ell_{i+1}$
for every $i \in \left[ k - 1 \right]$,
and
$A_{t} = \cup_{i=1}^{k} \left\{ \ell_{i}, \ell_{i+1}, \dots, u_{i} \right\}$.
We can then write
\begin{equation}\label{eq:Rt1}
R_{t}^{\left( 1 \right)} = \prod_{i=1}^{k} \prod_{j=\ell_{i}}^{u_{i}} \frac{R_{j}}{R_{j-1}}
= \prod_{i=1}^{k} \frac{R_{u_{i}}}{R_{\ell_{i} - 1}}.
\end{equation}
Since $\ell_{i} \in A_{t}$ we have that $R_{\ell_{i} - 1} > \tfrac{a}{b}$ for every $i \in \left[k\right]$,
and since $u_{i}+1 \notin A_{t}$ we have that $R_{u_{i}} \leq \tfrac{a}{b}$ for every $i \in \left[k\right]$;
the condition $R_{t} \leq \tfrac{a}{b}$ is necessary to ensure that this latter fact holds for $i=k$ as well, that is, that $R_{u_{k}} \leq \tfrac{a}{b}$.
Putting these together we obtain that each factor in~\eqref{eq:Rt1} is at most~$1$, showing that~\eqref{eq:Rt1leq1} holds.

\subsection{Proof of~\eqref{eq:Rt1_cantbetoosmall}} \label{sec:Rt1_cantbetoosmall} 

We first rewrite the probability in question by taking logarithms:
\[
\p_{1} \left( R_{t}^{\left( 1 \right)} \leq t^{-C_{1}} \right)
=
\p_{1} \left( \sum_{i \in A_{t}} \log \frac{R_{i}}{R_{i-1}} \leq - C_{1} \log t \right).
\]
Given $\theta = 1$ and $i \in A_{t}$,
the random variable $\log \tfrac{R_{i}}{R_{i-1}}$ has the following distribution:
it takes the value $\log \tfrac{b}{a}$ with probability $\tfrac{b}{a+b} p_{i}$
and it takes the value
$\log \tfrac{1 - \tfrac{b}{a+b} p_{i}}{1 - \tfrac{a}{a+b} p_{i}}$
with probability $1 - \tfrac{b}{a+b} p_{i}$.
Define $Y_{i} := \left( \log \tfrac{R_{i}}{R_{i-1}} \right) \wedge 0$.
Thus,
given $\theta = 1$ and $i \in A_{t}$,
the random variable $Y_{i}$
takes the value $\log \tfrac{b}{a}$ with probability $\tfrac{b}{a+b} p_{i}$
and it takes the value $0$ otherwise.
We then have that
\[
\p_{1} \left( \sum_{i \in A_{t}} \log \frac{R_{i}}{R_{i-1}} \leq - C_{1} \log t \right)
\leq
\p_{1} \left( \sum_{i \in A_{t}} Y_{i} \leq - C_{1} \log t \right)
=
\p_{1} \left( \sum_{i \in A_{t}} \frac{-Y_{i}}{\log \tfrac{a}{b}} \geq \frac{C_{1}}{\log \tfrac{a}{b}} \log t \right),
\]
and notice that $Y_{i}' := - Y_{i} / \log \tfrac{a}{b}$ is a Bernoulli random variable with expectation $\E Y_{i}' = \tfrac{b}{a+b} p_{i}$.
Let $\left\{ B_{i} \right\}_{i=1}^{\infty}$ be independent Bernoulli random variables with expectation $\E B_{i} = \tfrac{b}{a+b} p_{i}$.
We then have the following bound:
\[
\p_{1} \left( \sum_{i \in A_{t}} \frac{-Y_{i}}{\log \tfrac{a}{b}} \geq \frac{C_{1}}{\log \tfrac{a}{b}} \log t \right)
\leq
\p \left( \sum_{i=1}^{t} B_{i} \geq \frac{C_{1}}{\log \tfrac{a}{b}} \log t \right).
\]
Note that
$\E \sum_{i=1}^{t} B_{i} = \tfrac{a}{a+b} \sum_{i=1}^{t} p_{i} = \left( 1 + o \left( 1 \right) \right) C' \log t$
for some $C' = C' \left( a, b \right)$.
Hence by the multiplicative Chernoff bound we obtain that
\[
\p \left( \sum_{i=1}^{t} B_{i} \geq \frac{C_{1}}{\log \tfrac{a}{b}} \log t \right)
\leq \exp \left( - 2 \log t \right) = t^{-2}
\]
if $C_{1}$ is large enough, concluding the proof.

\subsection{Small revealing probabilities} \label{sec:small_Mt} 

Here we prove Lemma~\ref{lem:map_estimator_two}\eqref{lem:map_lb} in the case when
$M_{t} := \sum_{i=1}^{t} p_{i} < \delta \log t$,
where
$\delta \equiv \delta \left( a, b \right) := \tfrac{1}{2 \log \left( 1 + a/b \right)}$.
Note that the first two individuals follow their private signal, that is, $Z_{1} = X_{1}$ and $Z_{2} = X_{2}$.
Hence if $X_{1} = X_{2} = 2$, then $R_{2} = \left( b / a \right)^{2}$, and we are in the regime where the MAP estimator outputs $2$.
Thus if all the subsequent revealers have $2$ as their private signal then the MAP estimator continues to output $2$.
Let
$\Rev_{t} := \left\{ i \in \left[ t \right] : I_{i} = 1 \right\}$
denote the set of revealers until time $t$.
Recalling~\eqref{eq:map_error_theta1} we thus obtain the following lower bound:
\begin{equation}\label{eq:simple_lb}
\p \left( \MAP \left( Z_{1}, \dots, Z_{t-1}, X_{t} \right) \neq \theta \right)
\geq
\p_{1} \left( X_{1} = X_{2} = 2, \ X_{i} = 2 \ \forall i \in \Rev_{t} \right).
\end{equation}
Note that $\left| \Rev_{t} \right|$ is independent of $\theta$ and that the Chernoff bound gives that
$\p \left( \left| \Rev_{t} \right| > 1.8 \delta \log t \right) \leq 1/2$.
Thus we have that
\begin{align}
\p_{1} \left( X_{1} = X_{2} = 2, \ X_{i} = 2 \ \forall i \in \Rev_{t} \right)
&\geq
\frac{1}{2}
\p_{1} \left( X_{1} = X_{2} = 2, \ X_{i} = 2 \ \forall i \in \Rev_{t} \, \middle| \, \left| \Rev_{t} \right| \leq 1.8 \delta \log t \right) \notag \\
&\geq \frac{1}{2} \left( \frac{b}{a+b} \right)^{2 + 1.8 \delta \log t} = \frac{b^{2}}{2 \left( a + b \right)^{2}} \times t^{-0.9}. \label{eq:chernoff_final}
\end{align}
Putting together~\eqref{eq:simple_lb} and~\eqref{eq:chernoff_final} proves~\eqref{eq:goal_omega1t}.

\end{document}